\pgfplotsset{compat=1.11}
\newcommand{\R}{\mathbb{R}}
\newcommand{\N}{\mathbb{N}}
\newcommand{\F}{\mathcal{F}}
\newcommand{\LM}{\mathcal{L}}
\newcommand{\RH}{Rankine-Hugoniot }
\newcommand{\1}{\mathds{1}}
\newtheorem{teo}{Theorem}[section]
\newtheorem{prop}[teo]{Proposition}
\newtheorem{lem}[teo]{Lemma}
\theoremstyle{definition}
\newtheorem{defi}[teo]{Definition}
\newtheorem{rmk}[teo]{Remark}
\title{Wave-front tracking for a quasi-linear scalar conservation law with hysteresis}
\author{Fabio Bagagiolo\footnote{Department of Mathematics, University of Trento, Italy, fabio.bagagiolo@unitn.it}\ \ and Stefan Moreti\footnote{Department of Mathematics, University of Trento, Italy, stefan.moreti@unitn.it}}
\date{}
\begin{document}
\maketitle

\begin{abstract}
In this article we deal with the Cauchy problem for the quasi-linear scalar conservation law \[u_t+\F(u)_t+u_x=0,\] where $\F$ is a specific hysteresis operator, namely the Play operator. Hysteresis models a rate-independent memory relationship between the input $u$ and its output. Its presence in the partial differential equation gives a particular non-local feature to the latter allowing us to capture phenomena that may arise in some application fields. Riemann problems and the interactions between shock lines are studied and via the so-called Wave-Front Tracking method a solution to the Cauchy problem with bounded variation initial data is constructed. The solution found satisfies an entropy-like condition, making it the unique solution in the class of entropy admissible ones. 
\end{abstract}


\section*{Introduction}\label{S0}
In this work we deal with the Cauchy problem for a conservation law with hysteresis nonlinearities as follows:
\begin{equation}\label{eq: intro2}
    \begin{cases}
        u_t+w_t+u_x=0 &\quad \text{on } \R \times [0,T),\\
        w(x,t)={\cal F}[u(x,\cdot),w^0(x)](t)&\quad \forall\ t\in]0,T[,\ \text{a.e. } x\in\R\\
        u(x,0)=u_0(x) & \quad \text{in } \R, \\ w(x,0)=w_0(x) & \quad \text{in } \R.
    \end{cases}
\end{equation}
where: $\cal F$ is a so-called hysteresis operator, namely representing a memory-dependent input-output relationship between the pair of scalar functions $(t\mapsto u(x,t),t\mapsto w(x,t))$, one for almost every $x$; $u_0$ is the initial datum for the solution $u$; $w^0$ is a suitable space-dependent function for the initial values of the output $w$. The memory dependence represented by $\cal F$ is rate-independent, which is the main characterization of the hysteresis phenomena. We are going to consider the case where $\cal F$ is the so-called Play operator.

The presence of the hysteretic term $w$ gives \eqref{eq: intro2} a particular non-local feature which, up to the knowledge of the authors, for this kind of equation was not investigated before in the framework of characteristics, wave-front tracking and the corresponding limit procedure. More precisely, due to the rate-independent memory, the functional input-output relationship $u\mapsto w={\cal F}[u,w^0]$ is highly non-linear and non-differentiable and this fact leads to abrupt changes of characteristics after any possible time $t$ when, in dependence on $x$, the pair $(u,w)$ reaches suitable regions of the phase-space $u-w$. In the case of the Play operator, such abrupt changes can be somehow seen as the case of a discontinuity in the derivative of the flux function $f$ for a generic conservation law $u_t+f(u)_x=0$. For the Riemann problem, \eqref{eq: intro2} is then seen as a conservation law with a piece-wise linear flux function $f$, where the alternation of the values of $f'$ depends on the hysteretic relationship $u\mapsto w$ and is different for different points $x$. 

Our main goal is the study of a generic Cauchy problem for \eqref{eq: intro2} with initial data $u_0,w_0$ just bounded variation functions ($BV$). We then approximate $u_0$ and $w_0$ by piece-wise constant functions, solve the corresponding Riemann problems and, suitably adapting the wave-front tracking method, we pass to the limit. In doing that we face several non-standard problems:

i) with respect to the classical wave-front tracking method in our case the number of discontinuity waves after an interaction may increase, while the total variation of the solution remains constant.

ii) for the Riemann problem the solution $u$ is discontinuous in time and this is in general a problem for the hysteresis relation which is usually defined for continuous inputs $u$; we need to extend the definition of the Play operator to time-piece-wise constant inputs in the spirit of the so-called regulated functions (see Brokate-Sprekels \cite{RF2}, Krejci-Laurencot \cite{RF1} and Recupero \cite{RV});

iii) for the Cauchy problem, being the solution only $BV$, we need a further relaxations of the hysteresis relation writing it as a suitable measure-dependent variational integral inequality (suitably adapting the one in Visintin \cite{AVH1}). In particular, such variational inequality will be maintained to the limit, giving the existence of a weak solution for the Cauchy problem with hysteresis.

Our main results are the existence of a solution of \eqref{eq: intro2} with $BV$ data and the uniqueness in the class of functions satisfying an entropy condition.

In the article \cite{AVH1}, Visintin studies equation \eqref{eq: intro2} with different hysteresis operator (namely, the delayed relay). However, the PDE problem is there dealt by a time discretization method, without focusing on characteristic curves. In our article instead we use a different approach which is more constructive and gives a concrete idea on how the discontinuities behave and how the solution evolves in time. 

Hysteresis is a phenomenon often observed in various natural and engineered systems, typically characterized by a lag or delay in the response of a system to changes in the input. Well-known examples of hysteresis are in the behaviour of ferromagnetic materials, stress-strain relationship in plasto-elastic materials and behaviour of thermostats. For a comprehensive account for mathematical models for hysteresis and their use in connection with PDEs, we refer to Krasnoselskii-Pokrovskii \cite{CORR1} and Visintin \cite{AVH}.

Hyperbolic and scalar conservation laws with hsyeresis for some specific applied motivations were studied in \cite{Simile}, \cite{marchesin}, \cite{KOP1}, \cite{KOR1}, \cite{CF1}, \cite{MR}, \cite{CF2}, \cite{F1}. In particular, the models studied by Peszynska and Showalter in \cite{Simile} and \cite{MR} come from applications in transport with adsorption in porous media, where hysteresis is a common feature.  Kordulova in \cite{KOR1} summarizes the results known for such equations and Kopfova in \cite{KOP1} focuses on entropy conditions. Corli and Fan in their works \cite{CF1} and \cite{CF2} investigate a conservation law with hysteresis relation in the flux given by a parametric family of curves which are followed subject to the monotonicity of the input and to the trajectory of an auxiliary ODE. The model is motivated by application in traffic flow, where hysteresis is due to a delay in change of drivers behavior. Fan in the recent article \cite{F1} studies the same model as in \cite{CF1},\cite{CF2} and considers the Wave-Front tracking method, proving an estimate on the total variations of the solutions. However, the limit procedure, in order to have existence of a weak solution for more general initial data, is not completely performed especially for what concerns the passage to the limit into the hysteresis relationship. 

For general basic theory on scalar conservation laws we refer for example to the book by Evans \cite{EV}. For more specific results, such as the solution of Riemann problems for scalar conservation laws with piecewise linear flux and the classical Wave Front Tracking Method, we refer to the books by Bressan \cite{AB3} and by Holden-Risebro \cite{HH} or to the seminal paper by Dafermos \cite{DCP}.

 The article is structured as follows. In Section \ref{S1} we introduce the mathematical formalization of hysteresis and we define the Play operator $\F$ and its suitable extensions. In Section \ref{S2} we recall some results for scalar conservation laws and in particular we focus on the case when the flux is piece-wise linear. In Section \ref{S3} we introduce a suitable formulation of \eqref{eq: intro2} and in Section \ref{S4} we study the corresponding Riemann problem. In Section \ref{S5} we prove existence of weak solutions for the case of $BV$ initial data, in particular we perform the limit in the wave-front-tracking procedure. In Section \ref{S6} we give an entropy condition, showing that the solution constructed in the previous section is the only entropy solution.


\numberwithin{equation}{section}


\section{The Play Operator for hysteresis and its extensions}\label{S1}
Figure \ref{fig: linplay} represents the so-called input-output Play hysteresis relationship between a time-dependent scalar input $u$ and a time-dependent scalar output $w$. Here the amplitude $a>0$ is fixed, and we denote by $\cal L$ the strip $\{(u,v)||u-w|\le a\}$ which is going to represent the feasible states of the system. If at certain time $t$ the pair $(u(t),w(t))$ satisfies $|u(t)-w(t)|<a$, that is if it belongs in the interior of $\cal L$, and if the input changes in time, then the output will not change, until the pair $(u,w)$ will possibly reach one of the two boundary lines of $\cal L$. If $w(t)=u(t)-a$, that is the pair $(u,w)$ is on the lower boundary of $\cal L$ and if the input increases, then the output will increase together with $u$; if instead $u$ decreases then $w$ stays constant and the pair $(u,w)$ enters the interior of $\cal L$. If $(u,w)$ belongs to the upper boundary of $\cal L$, then the behavior is symmetric, reversing the role of the monotonicity of $u$.
\begin{figure}[b]
    \centering
    \begin{tikzpicture}[scale=0.8]
         \draw[->, gray] (-3, 0) -- (3, 0) node[right] {$u$}; 
    \draw[->, gray] (0, -3) -- (0, 3) node[right] {$w$};
    \draw[-, thick] (-3, -2) -- (1.5, 2.5);
    \draw[->] (-0.2,1)--(-0.8,0.4);
    \draw[-, thick] (-1.5, -2.5) -- (3, 2); 
    \draw[-, dashed] (0, 1) -- (2, 1);
    \draw[-, dashed] (0,-1)--(-2,-1);
    \draw[->] (0.2,-1)--(0.8,-0.4);
    \draw[<->] (0.5,1.2)--(1.5,1.2);
    \draw[<->] (-0.5,-1.2)--(-1.5,-1.2);
    \node at (-1.4,0.2) {$-a$} ;
    \node at (1.2,-0.2) {$a$} ;
    \end{tikzpicture}
    \caption{Play operator}
    \label{fig: linplay}
\end{figure}
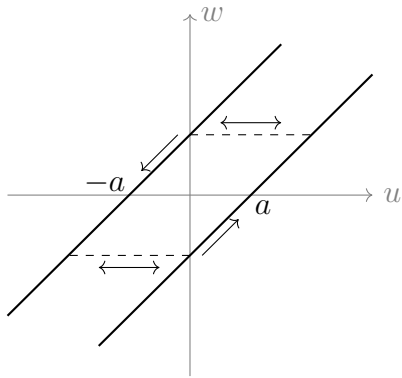
 
Given an initial state $(u(0),w(0))\in \mathcal{L}$ and the evolution of $u$ we can then trace the evolution $w$. We can notice that the value of $w(t)$ is not determined pointwisely by $u(t)$, indeed for fixed $u(t)$ we have more than one possible value $w$, depending on the past evolution of $u$. Hence $w(t)$, besides its initial value $w(0)$, is determined by the whole history $u|_{[0,t]}$, i.e. there is a memory effect involved.\\ 
In particular this memory effect is rate-independent which means that the relation between $u$ and $w$ does not depend on the speed of $u(t)$, i.e. it does not depend on its derivative. Notice that this requirements is essential if we want to draw hysteresis relations as in Figure \ref{fig: linplay} and it is a general feature of hysteresis phenomena. 

When the input $u$ is in $W^{1,1}(0,T)$, for some $T>0$, the Play hysteresis relationship described above can be characterized by the following variational inequality for almost every $t\in[0,T]$:

\begin{equation}\label{eq: defplay}
    |u-w|\leq a, \quad w'(u-w-v)\geq 0 \quad \forall v, |v|\leq a.
\end{equation}

\noindent
Conditions \eqref{eq: defplay} can be interpreted in the following way. If $|u-w|<a$ then the term $u-w-v$ can have either positive or negative sign depending on $v$. Since the inequality has to hold true for every $v$ then necessarily $w'=0$. Whereas, if for example $u=w+a$, then $u-w-v$ is always non negative for all $v$, then also $w'$ has to non negative. The opposite happens when $u=w-a$. In \cite{AVH} (see also \cite{CORR1}) it is shown how \eqref{eq: defplay} well defines an operator on the space of Sobolev functions which, by a density argument, can be uniquely extended to an operator which maps continuous inputs to continuous outputs. Hence the Play operator is proved to be  as an operator continuously (with respect to the uniform convergence) acting as 
\[\begin{split}
\F: ~& \tilde{\cal L}\subset C^0([0,T])\times \R \to C^0([0,T])\\&(u(\cdot),w_0)\to [\F(u,w_0)](\cdot)=:w(\cdot) \end{split}\]
where 

\[
\tilde{\cal L}=\left\{(u,w_0)\in C^0([0,T])\times\R\Big|(u(0),w_0)\in{\cal L}\right\}.
\]

\noindent
Moreover such an operator satisfies
\begin{enumerate}[i)]
    \item \textit{Causality}: $\forall~(u_1,w_0), (u_2,w_0)$, such that $u_1=u_2$ in $[0,t]$ then 
    \[\F(u_1,w_0)](t)=[\F(u_2,w_0)](t).\]
    \item \textit{Rate-independence}: $\forall~(u,w_0)$, $\forall ~t\in [0,T]$ if $s:~ [0,T]\to [0,T]$ is an increasing homeomorphism, then \begin{equation}\label{eq: rateindependence}\F(u\circ s, w_0)](t)=[\F(u,w_0)](s(t)).\end{equation}
    \item \textit{Semigroup property}: $\forall~(u,w_0)$, $\forall ~t_1<t_2\in [0,T]$ setting $w(t_1):=[\F(u,w_0)](t_1)$ then we have that \begin{equation}\label{eq: semigroup}
    [\F(u,w_0)](t_2)=[\F(u(t_1+\cdot),w(t_1))](t_2-t_1).\end{equation}
\end{enumerate}

In the following, we will need to consider $\F$ as applied to $u$, with $u$ solution to a Riemann problem for a conservation law. Hence we have to extend the definition of $\F$ to piece-wise constant inputs with a finite number of jumps. \begin{defi}
    A function $g:[0,T]\to \R$ is piece-wise constant if we can find $0=t_0<t_1<\dots<t_N=T$ and $g_i \in \R$ such that \begin{equation}
    g(x)= \sum_{i=1}^N g_i \1_{(t_{i-1},t_i)}.\end{equation} We denote by $PC([0,T])$ the space the piece-wise constant functions on $[0,T].$
\end{defi} Let us consider the simple case of a function $u:[0,T]\to \R$ consisting of two constant states, $u_-,u_+$, separated by discontinuity at $t=t^*.$ and let us suppose that $u_-<u_+$. The idea is to suitably approximate $u$ with continuous functions $u_\varepsilon$ and then apply the operator $\F$ to these functions and let $\varepsilon$ go to $0$. So, for $\varepsilon>0$, let us define \begin{equation}\label{eq: uapprox}
    u_\varepsilon(t) =\begin{cases}
        u_- \quad &0\leq t < t^*-\varepsilon\\
        f_\varepsilon(t) \quad &t^*-\varepsilon \leq t \leq t^* +\varepsilon \\
        u_+ \quad &t^* + \varepsilon < t \leq T,
    \end{cases} 
\end{equation} with 
\begin{equation}\begin{split}\label{eq: f_epsilon}
    &f_\varepsilon:[t^*-\varepsilon, t^*+\varepsilon]\to [u_-,u_+]\ \text{continuous and increasing and}\\
    & f_\varepsilon(t^*-\varepsilon)=u_- \quad \quad f_\varepsilon(t^*+\varepsilon)=u_+.
\end{split}\end{equation} 
We can notice that, by the rate-independence \eqref{eq: rateindependence}, for every $\varepsilon>0$ the value of output $w_\varepsilon(t^*+\varepsilon)$ does not depend on the choice of the function $f_\varepsilon$ satisfying \eqref{eq: f_epsilon} (see also Remark \ref{rmrk: regulated}). So the limit $\lim_{\varepsilon\to0}w_\varepsilon(t^*+\varepsilon):=w^*$ exists and we can define the output $w$ for the discontinuous input $u$ as  \begin{equation}
w(t)=\begin{cases}
    w^0 \quad & 0\leq t<t^*\\
    w^* \quad & t^*<t\leq T
\end{cases}
\end{equation}
which of course coincides for $t\neq t^*$ with 
    $$w(t):= \lim_{\varepsilon\to 0}w_\varepsilon(t)=\lim_{\varepsilon\to0} [\F(u_\varepsilon, w_0)](t).$$

\noindent
Note that the output $w$ is also a piece-wise constant function with at most one jump in $t^*$, where both $u$ and $w$ do not need to be defined. \par Then with the above construction it is straightforward to extend $\F$ to the piece-wise constant function so \[\F:~PC([0,T])\times \R \to PC([0,T]).\]

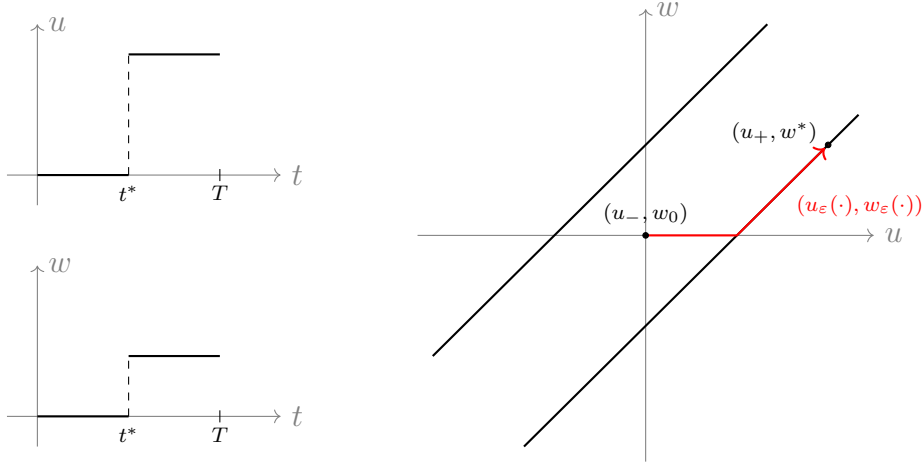
\begin{figure}
    \centering
    \begin{tikzpicture}[scale=0.8]
         \draw[->, gray, yshift=1cm] (-0.5, 0) -- (4, 0) node[right] {$t$}; 
        \draw[->, gray,yshift=1cm] (0, -0.5) -- (0, 2.5) node[right] {$u$};
        
        \draw[-,thick,yshift=1cm] (0,0)--(1.5,0);
        \draw[-,thick,yshift=1cm] (1.5,2)--(3,2);

        \draw[-,yshift=1cm](3,0.1)--(3,-0.1);
        \draw[-,dashed,yshift=1cm](1.5,0)--(1.5,2);

         \node[text = black, below] (r) at (1.5,1) {\scriptsize{$t^*$}};
        \node[text = black, below] (r) at (3,1) {\scriptsize{$T$}};

        \draw[->, gray, yshift=-3cm] (-0.5, 0) -- (4, 0) node[right] {$t$}; 
        \draw[->, gray,yshift=-3cm] (0, -0.5) -- (0, 2.5) node[right] {$w$};
        
        \draw[-,thick,yshift=-3cm] (0,0)--(1.5,0);
        \draw[-,thick,yshift=-3cm] (1.5,1)--(3,1);

        \draw[-,yshift=-3cm](3,0.1)--(3,-0.1);
        \draw[-,dashed,yshift=-3cm](1.5,0)--(1.5,1);

         \node[text = black, below] (r) at (1.5,-3) {\scriptsize{$t^*$}};
        \node[text = black, below] (r) at (3,-3) {\scriptsize{$T$}};

    \draw[->, gray,xshift=10cm] (-3.75, 0) -- (3.75, 0) node[right] {$u$}; 
    \draw[->, gray,xshift=10cm] (0, -3.75) -- (0, 3.75) node[right] {$w$};
    \draw[-, thick,xshift=10cm] (-3.5, -2) -- (2, 3.5);
    \draw[-, thick,xshift=10cm] (-2, -3.5) -- (3.5, 2); 

    \draw[->,red, thick,xshift=10cm] (0,0)--(1.5,0)--(2.95,1.45);
    
    \filldraw[xshift =10cm] (0,0) circle (1.3pt) node at (0,0)[above]{\scriptsize{$(u_-,w_0)$}};
    \filldraw[xshift =10cm] (3,1.5) circle (1.3pt) node at (3,1.7)[left]{\scriptsize{$(u_+,w^*)$}};
    \filldraw[xshift =10cm,red] (3,1.5) circle (0pt) node at (2.3,0.5)[right]{\scriptsize{$(u_\varepsilon(\cdot),w_\varepsilon(\cdot))$}};

    \end{tikzpicture}
    \caption{An explicit example of the operator $\F$ applied to piecewise constant $u$. Here $a=1$, $u_-=w_0=0$ and $u_+=2$ consequently $w^*=1$. In red we highlight the path followed by the couple $(u_\varepsilon,w_\varepsilon)$ which, after the limit procedure, collapses to $(u_-,w_0)$ for $t<t^*$ and $(u_+,w^*)$ for $t>t^*.$}
    \label{fig: esempioF}
\end{figure}
\begin{rmk}\label{rmrk: regulated}
    The monotonicity property required to $f_\varepsilon$ as in \eqref{eq: f_epsilon} is not indeed necessary, due to some suitable properties of the Play hysteresis operator (see \cite{AVH}).

    Also for piece-wise constant inputs and outputs the semigroup property holds as: $\forall~(u,w_0)$, $\forall ~t_1<t_2\in [0,T]$ such that $w(t_1):=[\F(u,w_0)](t_1)$ is defined, then \eqref{eq: semigroup} holds as equality for piece-wise constant functions.

    The way to extend the Play operator to piecewise continuous inputs, as we sketched above, can be seen as a special case of a more general issue concerning the extension to the so-called regulated functions (see\cite{RF2} and \cite{RV}).
\end{rmk}

In the sequel of the paper we will also perform a limit of solutions of approximating Riemann problems, and hence we will need an extension of the Play operator to even less regular inputs, as $BV$. In view of this fact, here we give a further characterization of $\F$ for piecewise constant inputs which will inspire a weak formulation for $BV$ inputs (see Remark \ref{rmrk: weaker}).

\begin{prop}\label{prop: whis}
    Let $w_0 \in \R$ and $u,w:[0,T]\to \R$ be piecewise constant functions with a finite number of discontinuities. Moreover, denote by $\Tilde{u}$ and $\Tilde{w}$ the right-continuous representatives of $u$ and $w$ respectively and suppose $u(0)=\Tilde{u}(0)$ and $w(0)=\Tilde{w}(0)$. Then the following are equivalent: 
    \begin{enumerate}
        \item $w = \F[u,w_0]$ for almost every $t$;
        \item $|\Tilde{w}-\Tilde{u}|\leq a$ for every $t$ and \begin{equation}\label{eq: weakhis}
            \int\limits_{(0,t)}(\Tilde{u}-\Tilde{w})d(Dw) \geq a |Dw|((0,t)) \quad \text{ for every } t\in (0,T],
        \end{equation}
        where $Dw$ denotes the measure associated to distributional derivative of $w$, and $|Dw|$ its total variation.
    \end{enumerate}
\end{prop}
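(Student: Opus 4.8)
The plan is to exploit the fact that, since $u$ and $w$ are piecewise constant with finitely many jumps, the measure $Dw$ is a finite sum of Dirac masses $Dw=\sum_k c_k\delta_{t_k}$, with $c_k=\tilde{w}(t_k)-\tilde{w}(t_k^-)$ the jump of $w$ at $t_k$ and $|Dw|((0,t))=\sum_{0<t_k<t}|c_k|$. Since the integrand is evaluated at the right-continuous representatives, the variational integral reduces to the finite sum $\int_{(0,t)}(\tilde{u}-\tilde{w})\,d(Dw)=\sum_{0<t_k<t}(\tilde{u}(t_k)-\tilde{w}(t_k))\,c_k$. The crucial preliminary observation is that the pointwise constraint $|\tilde{u}-\tilde{w}|\le a$ alone already forces the reverse of \eqref{eq: weakhis}: each summand satisfies $(\tilde{u}(t_k)-\tilde{w}(t_k))c_k\le|\tilde{u}(t_k)-\tilde{w}(t_k)|\,|c_k|\le a|c_k|$, so that $\int_{(0,t)}(\tilde{u}-\tilde{w})\,d(Dw)\le a|Dw|((0,t))$ automatically. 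Hence, under the constraint, \eqref{eq: weakhis} is equivalent to the equality $\int_{(0,t)}(\tilde{u}-\tilde{w})\,d(Dw)=a|Dw|((0,t))$ for every $t$, and since the summands are nonpositive this is in turn equivalent to the saturation of each atom, namely $(\tilde{u}(t_k)-\tilde{w}(t_k))c_k=a|c_k|$ for every $k$. This last condition says precisely that whenever $w$ jumps upward ($c_k>0$) one has $\tilde{u}(t_k)-\tilde{w}(t_k)=a$ (the pair sits on the lower boundary of $\mathcal{L}$), while whenever $w$ jumps downward ($c_k<0$) one has $\tilde{u}(t_k)-\tilde{w}(t_k)=-a$ (upper boundary).

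For the implication $1\Rightarrow 2$, I would first note that $|\tilde{u}-\tilde{w}|\le a$ holds everywhere because by construction the Play operator keeps the pair $(u,w)$ inside $\mathcal{L}$, the right-continuous value $(u_+,w^*)$ at a jump being the limit point computed in \eqref{eq: uapprox}--\eqref{eq: f_epsilon}. It then remains to verify the atom condition at each jump $t_k$ of $w$. Using the single-jump analysis already carried out together with the semigroup property \eqref{eq: semigroup} (valid for piecewise constant inputs by Remark \ref{rmrk: regulated}), a jump of $w$ occurs only when the pair reaches a boundary of $\mathcal{L}$: an increasing jump of $u$ large enough to exit $\mathcal{L}$ drags $w$ up along the lower boundary, so $w_+=u_+-a$ and $\tilde{u}(t_k)-\tilde{w}(t_k)=a$; a decreasing jump drags $w$ down along the upper boundary, giving $\tilde{u}(t_k)-\tilde{w}(t_k)=-a$. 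In both cases $(\tilde{u}(t_k)-\tilde{w}(t_k))c_k=a|c_k|$, so by the first paragraph \eqref{eq: weakhis} holds, indeed with equality.

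For the converse $2\Rightarrow 1$, I would run the first paragraph backwards: the constraint plus \eqref{eq: weakhis} yield equality of the integrals, hence the atom saturation $(\tilde{u}(t_k)-\tilde{w}(t_k))c_k=a|c_k|$ at every jump of $w$. Writing $w_0=w(0)=\tilde{w}(0)$, which lies in $\mathcal{L}$ together with $u(0)$ by the constraint at $t=0$, I would then show by induction on the jumps of $u$ that these conditions determine $w$ uniquely and that the unique such $w$ is $\F[u,w_0]$. On the first interval of constancy $w\equiv w_0=\F[u,w_0]$; at the first jump $u_-\to u_+$ the constraint $|u_+-w_+|\le a$ together with the saturation rule leaves exactly one admissible value of $w_+$: it equals $w_0$ when $|u_+-w_0|\le a$, equals $u_+-a$ when $u_+-w_0>a$, and equals $u_++a$ when $u_+-w_0<-a$. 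This is exactly the value produced by the Play operator in the single-jump construction, and the semigroup property \eqref{eq: semigroup} lets me restart the analysis from $(u_+,w_+)$ and iterate over the remaining finitely many jumps, yielding $w=\F[u,w_0]$ almost everywhere.

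The routine bookkeeping (the choice of right-continuous representatives, the endpoint conventions in $(0,t)$, and the reduction of the Stieltjes integral to a finite sum) is straightforward; the conceptual core is the observation in the first paragraph that the constraint $|\tilde{u}-\tilde{w}|\le a$ supplies the reverse inequality for free, so that a one-sided variational inequality is in fact an equality pinning the pair $(u,w)$ to the correct boundary of $\mathcal{L}$ with the correct sign. I expect the main obstacle to be the uniqueness step in $2\Rightarrow 1$: one has to argue that the constraint forbids any spurious jump of $w$ at points where it is not forced by the Play dynamics — for instance, a jump of $w$ at a continuity point of $u$ would immediately violate $|\tilde{u}-\tilde{w}|\le a$ just before that point — so that matching the atom data to the Play output on each subinterval really does reconstruct $\F[u,w_0]$ and nothing else.
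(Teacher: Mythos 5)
Your proof is correct and follows essentially the same route as the paper's: reduce $Dw$ to a finite sum of Dirac masses, use the constraint $|\Tilde{u}-\Tilde{w}|\leq a$ to turn the one-sided inequality \eqref{eq: weakhis} into the atom-wise saturation $\Tilde{u}(t_k)-\Tilde{w}(t_k)=sign(c_k)\,a$ pinning the pair to the correct boundary of $\mathcal{L}$ at each jump of $w$, and then reconstruct the Play output jump by jump via the semigroup property. The paper organizes this as a recursive check over the intervals $(t_i,t_{i+1}]$ and defers the ``inequality is actually an equality'' observation to a remark, so the difference is purely presentational.
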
 
\begin{proof}
    First notice that, since $\Tilde{w}$ is of the following form $\Tilde{w}= \sum_{i=1}^N w_i \1_{[t_{i-1},t_{i})}$ then its distributional derivative (and the distributional derivative of $w$) can be represented by a finite sum of  Dirac's delta. In particular \begin{equation}
        Dw = \sum_{i=1}^{N-1} (w_{i+1}-w_{i}) \delta_{t_i},
    \end{equation} and \begin{equation}
        |Dw|((0,t))=\sum_{\{i~|~t_i<t\}} |w_{i+1}-w_{i}|. 
    \end{equation}
    $(1 \Rightarrow 2)$ By definition of $\F$ of course $|\Tilde{w}-\Tilde{u}|\leq a$ everywhere. Let us now consider $t\in (0,t_1],$ then $|Dw|((0,t))=Dw((0,t))=0$ hence \eqref{eq: weakhis} holds trivially. Now if $t\in (t_1,t_2]$ we have that \[\int\limits_{(0,t)} (\Tilde{u}-\Tilde{w})~d(Dw)=(\Tilde{u}(t_1)-\Tilde{w}(t_1))(w_{2}-w_{1})\] and \[a |Dw|((0,t))=a |w_{2}-w_{1}|.\] Now since $w_{2}-w_{1}\not=0$, i.e. $w$ did not remain constant, then the arriving point after the jump, $(\tilde u(t_1),w_2)$, must be on the boundary of $\cal L$ (see Figure \ref{fig: esempioF}), hence $\Tilde{u}(t_1)-\Tilde{w}(t_1)=\pm a$ (recall that by definition $\lim_{t\to t_1^+} u(t)=\Tilde{u}(t_1)$ and $\lim_{t\to t_1^+} w(t)=\Tilde{w}(t_1)$). In particular we can also write $\Tilde{u}(t_1)-\Tilde{w}(t_1)=sign(w_2-w_1)a$ and so \eqref{eq: weakhis} holds as an equality. Now we can can extend recursively the proof on the whole interval $(0,T]$.\\
    $(2 \Rightarrow 1)$ Consider first $t\in (0,t_1]$ then we have $w\equiv w_1$ in $(0,t)$ and so the constraint $|\Tilde{w}-\Tilde{u}|\leq a$ tells us that $|u-w_1|\leq a$ for almost every $t\in (0,t_1)$ so even if $u$ may change values it can not exceed the threshold $w_1\pm a$ which would imply a change in $w$. Hence the hysteresis relation $\F[u,w_0]$ holds. If instead we consider $t\in (t_1,t_2]$ then \eqref{eq: weakhis} tells us that \[(\Tilde{u}(t_1)-\Tilde{w}(t_1))(w_{2}-w_{1})\geq a |w_{2}-w_{1}| \] which, because of $|\Tilde{w}-\Tilde{u}|\leq a$, is equivalent to \[(\Tilde{u}(t_1)-\Tilde{w}(t_1))=sign(w_{2}-w_1) a.\] So we conclude that if a jump in $w$ occurs then we reach the upper or lower boundary of $\cal L$, depending whether the jump of $w$ is increasing or decreasing, and of course, due also to the constraint $(u,w)\in{\cal L}$, $u$ must jump with the same monotonicity sign. Hence an approximation $u_\varepsilon$ as in \eqref{eq: uapprox} is easily constructed and the corresponding output $w_\varepsilon$ almost everywhere converge to $w$, showing that $w={\cal F}(u)$ (see again Figure \ref{fig: esempioF}).
\end{proof}
\begin{rmk}
    In the proof we saw how \eqref{eq: weakhis} actually holds as an equality. Sometimes, in the sequel, it will be convenient to write it as an inequality, nevertheless, because of the necessary condition $|\Tilde{w}-\Tilde{u}|\leq a$ the two formulation are equivalent.
\end{rmk}

\begin{rmk}\label{rmk: state}
When we introduce the operator $\F$ in a PDE, we have to define it on functions $u$ both depending on a space variable $x\in\R^n$, for some $n$, and on time $t$. In this case, for every fixed $x\in\R^n$, we see $u(x,\cdot)$ as a function of time only, and then we define the output as \begin{equation}
     w(x,t):= [\F(u(x,\cdot),w_0(x))](t), \quad \text{a.e}\ x, \forall\ t,
\end{equation} 

\noindent
where the initial state of the output is now a given function depending on $x$.
\end{rmk}


\section{Solution to scalar Riemann problems with piecewise linear flux}\label{S2}
In this section we will briefly recall some results for the Riemann problem for a conservation law with piecewise linear flux function which will be used in Section \ref{S4}. Let us deal with the Cauchy problem \begin{equation}\label{eq: RMpl}
    \begin{cases}
    u_t+f(u)_x=0 &\quad (x,t)\in \R\times [0,+\infty), \\
    u(x,0)=u_0(x) &\quad x\in \R, 
    \end{cases}
\end{equation} with $f:\R\to\R$ continuous and piecewise linear and \begin{equation*}
    u_0(x)=\begin{cases}
        u_l \quad x<0,\\
        u_r \quad x>0.
    \end{cases}
\end{equation*} With $u_0$ as above, \eqref{eq: RMpl} is usually called Riemann problem. 

A weak solution for \eqref{eq: RMpl}, even for more general $f$ and $u_0$ (at least integrable), is here defined.
\begin{defi}\label{def: weaksol}
    We say that $u:\R\times [0,+\infty)\to \R$, $u\in L^1_{loc}$, is a weak solution of \eqref{eq: RMpl} if\begin{equation}\label{eq: weaksol}
        \int\limits_0^{+\infty} \int\limits_{-\infty }^{+\infty}[u \phi_t+f(u) \phi_x] ~dx~ dt +\int\limits_{-\infty}^{+\infty} u_0(x) \phi(x,0) ~dx=0
    \end{equation} for every $C^1$ function $\phi: \R\times [0,+\infty)\to \R$ with compact support.  
\end{defi} It is known that a weak solution with a jump discontinuity along a curve $C$ parametrized by $t\mapsto s(t)$ with $s$ derivable, should satisfy the so called \RH condition (see e.g. \cite{EV}):
\begin{equation}\label{eq: RHcond}
         f(u_-(t))-f(u_+(t))=s'(t)(u_-(t)-u_+(t))
     \end{equation} 

     \noindent 
     on $C,$ where $u_-(t),u_+(t)$ are respectively the limits from the left and right parts of the domain separated by discontinuity curve $C$.

In general for conservation laws there is not uniqueness of the weak solution. The introduction of the so-called entropy condition allows us to select a unique solution. In particular one can require that $u$ satisfies  
\begin{equation} \label{eq: entopkruz}
        \int_0^{+\infty} \int_{-\infty}^{+\infty} \{ |u-k|\phi_t+sign(u-k)(f(u)-f(k))\phi_x\}~dxdt\geq 0,
    \end{equation}

    \noindent
    for every $k\in \R$ and for every non negative smooth $\phi$  with compact support. It is proven, \cite{Krukov} (see also \cite{AB3}, \cite{HH}), that for a scalar conservation law there exists one and only one weak solution satisfying \eqref{eq: entopkruz}. Such unique solution is usually called the entropy solution.

For piece-wise constant weak solutions, using also the \RH condition \eqref{eq: RHcond}, the entropy condition \eqref{eq: entopkruz} simply reduces to (see \cite{AB3})
\begin{equation}\label{eq: entrop}
        \frac{f(u)-f(u_+)}{u-u_+}\leq \frac{f(u_+)-f(u_-)}{u_+-u_-} \leq \frac{f(u)-f(u_-)}{u-u_-}
    \end{equation} for every value $u$ between $u_-$ and $u_+$.
    
     For the case of piecewise linear flux, the entropy solution of the Riemann problem can be explicitly constructed, see \cite{DCP}, \cite{AB3} and \cite{HH}. Such solution is piecewise constant with a finite number of values.
     For the purposes of next sections we sketch here such construction.

     We recall that $f:\R\to\R$ is piecewise linear if $f$ is continuous and there is a finite family of disjoint intervals, covering $\R$, where $f$ is affine. Moreover $u:\R\times[0,+\infty[\to\R$ is piecewise constant if the domain can be partioned in a finite family of subsets where $u$ is constant.

    In \eqref{eq: RMpl}, let us suppose $u_l<u_r$, and consider $f_c$ the greatest convex minorant of $f$ on the interval $[u_l,u_r]$, which we recall to be \begin{equation}
        f_c:= \sup \{ h \text{ convex in } [u_l,u_r]~|~ h\leq f\}.
    \end{equation} It is easy to see that $f_c$ is a piecewise linear function hence the graph is a polygonal curve. Let us denote the vertices in the following way: $(u_l, f(u_l))$,$(u_1, f(u_1))$, \dots, $(u_k, f(u_k))$, $(u_r, f(u_r))$ where $u_l<u_1<\dots < u_r$ and $(u_i, f(u_i))$, by construction, are some of the vertices of $f$. Because of the convexity we have \begin{equation}\label{eq: provaK}
         \frac{f(u_1)-f(u_l)}{u_1-u_l}<\frac{f(u_2)-f(u_1)}{u_2-u_1}<\dots < \frac{f(u_r)-f(u_k)}{u_r-u_k},
    \end{equation} hence we define the following function, see Figure \ref{fig: piecewiselin},
    \begin{equation}
        u(x,t)=\begin{cases}
            u_l \quad & -\infty < \frac{x}{t} < \frac{f(u_1)-f(u_l)}{u_1-u_l}\\
            u_1 \quad & \frac{f(u_1)-f(u_l)}{u_1-u_l} < \frac{x}{t} < \frac{f(u_2)-f(u_1)}{u_2-u_1}\\
            \quad \dots\\
            \quad \dots \\
            u_r \quad & \frac{f(u_r)-f(u_k)}{u_r-u_k} < \frac{x}{t} < +\infty.
        \end{cases}
    \end{equation} Now by construction the \RH condition holds, hence we have a weak solution (as it is known, it is easy to see that a piecewise constant function satisfying the \RH condition is a weak solution). Moreover for every $u\in[u_i,u_{i+1}]$ the entropy condition \eqref{eq: entrop} holds as an equality, due to the piecewise linearity. When $u_l>u_r$ we consider instead the least concave majorant, which again is a piecewise linear function and by considering the vertices of its graph we construct the solution similarly as above. 
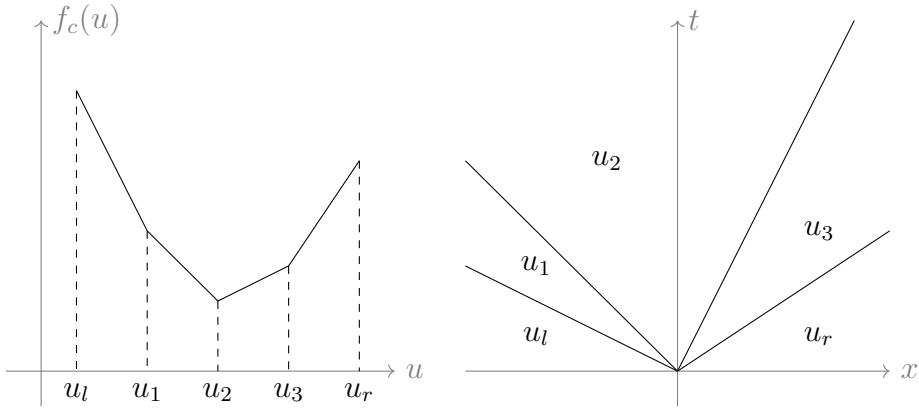
\begin{figure}[H]
    \centering
    \begin{tikzpicture}[scale=0.93]
        \draw[->, gray] (-0.5, 0) -- (5, 0) node[right] {$u$}; 
    \draw[->, gray] (0, -0.5) -- (0, 5) node[right] {$f_c(u)$};
    \draw[-] (0.5,4)--(1.5,2)--(2.5,1)--(3.5,1.5)--(4.5,3);
    
    \draw[-,dashed] (0.5,4)--(0.5,0);
    \node at (0.5,-0.3) {$u_l$};
    \draw[-,dashed](1.5,2)--(1.5,0);
    \node at (1.5,-0.3) {$u_1$};
    \draw[-,dashed](2.5,1)--(2.5,0);
    \node at (2.5,-0.3) {$u_2$};
    \draw[-,dashed](3.5,1.5)--(3.5,0);
    \node at (3.5,-0.3) {$u_3$};
    \draw[-,dashed](4.5,3)--(4.5,0);
    \node at (4.5,-0.3) {$u_r$};

    \draw[->, gray, xshift= 9cm] (-3, 0) -- (3, 0) node[right] {$x$}; 
    \draw[->, gray,  xshift= 9cm] (0, -0.5) -- (0, 5) node[right] {$t$};

    \draw[-, xshift= 9cm] (0, 0) -- (-3, 1.5);
    \draw[-,  xshift= 9cm] (0, 0) -- (-3, 3); 
    \draw[-, xshift= 9cm] (0, 0) -- (2.5, 5); 
    \draw[-,  xshift= 9cm] (0, 0) -- (3, 2); 

    \node at (7,0.5) {$u_l$};
    \node at (7,1.5) {$u_1$};
    \node at (8,3) {$u_2$};
    \node at (11,2) {$u_3$};
    \node at (11,0.5) {$u_r$};
    
    \end{tikzpicture}
    \caption{An example of solution with piecewise linear flux}
    \label{fig: piecewiselin}
\end{figure}


\section{Weak formulation for the Cauchy problem with Play hysteresis}\label{S3}

In this section we consider the following problem

\begin{equation}\label{eq: hpde}
    \begin{cases}
        u_t+w_t+u_x=0\quad \text{in } \R_T,\\
        w=[\F(u,w_0)], \\
        u(x,0)=u_0(x) \quad \text{in } \R ,\\
        w(x,0)=w_0(x) \quad \text{in } \R,
    \end{cases}
\end{equation} where $\R_T:= \R \times [0,T)$, and the state-dependent hysteresis operator is defined as in Remark \ref{rmk: state}, starting from the Play operator (with amplitude $a>0$) applied to $t\mapsto u(x,t)$ for almost every $x$ fixed.

\begin{defi} \label{def: hweaksol}
    A couple of $L^1_{loc}$ functions $(u,w)$ with $u,w:\R_T \to \R$ is a weak solution to \eqref{eq: hpde} if: \begin{enumerate}[i)]
    \item it satisfies the following weak formulation of the PDE \begin{equation}\label{eq: hweaksol}
        \int\limits_0^{+\infty} \int\limits_{-\infty }^{+\infty}[(u+w) \phi_t+u \phi_x] ~dx~ dt +\int\limits_{-\infty}^{+\infty} (u_0(x) +w_0(x))\phi(x,0) ~dx=0,
    \end{equation} for every $C^1$ function $\phi$ with compact support in $\R_T$;
    \item for almost every $(x,t)\in \R_t$ it holds \begin{equation}\label{eq: dishis}
        |u(x,t)-w(x,t)|\leq a;
    \end{equation} 
    \item for almost every $x$, the distributional derivative $\frac{\partial w}{\partial t}$ is a measure on $\R_T$ (denoted in the same way) that satisfies \begin{equation}
        \label{eq: genweakhis}
        \frac{1}{2}\int_\R (u(x,t)^2-u_0(x)^2)~dx+\frac{1}{2}\int_\R (w(x,t)^2-w_0(x)^2)~dx \leq - a \Big|\frac{\partial w}{\partial t} \Big|(\R\times (0,t)),
    \end{equation}  for almost every $t\in (0,T)$.
\end{enumerate}
\end{defi}
\begin{rmk}\label{rmrk: weaker}
    Similarly to \cite{AVH1}, equation \eqref{eq: genweakhis} can be interpreted as an equivalent formulation of \eqref{eq: weakhis}, in the case of $H^1$ functions which also depend on the state $x$. Indeed suppose that both $u, w$ are in $H^1(\R_T)$  solution of the PDE \eqref{eq: hpde} then \eqref{eq: genweakhis} reads as follows \[\int_{-\infty}^{+\infty} \int_0^{t} u_t u +w_t w ~dtdx + a \int_{-\infty}^{+\infty}\int_0^t |w_t|~dtdx \leq 0\] so, by the PDE, \[\int_{-\infty}^{+\infty} \int_0^{t} -(w_t u +u_xu)+w_t w ~dtdx + a \int_{-\infty}^{+\infty}\int_0^t |w_t|~dtdx \leq 0.\] Since for almost every $t$, $u$ is in $H^1(\R)$ so \[\int_{-\infty}^{+\infty} u_x u ~dx=0 \text{ for almost every } t,\] we can conclude that \[\int_{-\infty}^{+\infty}\int_0^t(w-u)w_t ~dt dx + a \int_{-\infty}^{+\infty}\int_0^t |w_t|~dtdx \leq 0,\] which is indeed \eqref{eq: weakhis} extended to space-dependent $H^1$ functions. Finally, as in Section \ref{S1} for time-dependent piecewise constant functions, and arguing as in \cite{AVH2} and \cite{AVH1}, \eqref{eq: genweakhis} can be seen to be a suitable weak extension of the Play operator to time-space dependent integrable functions.
\end{rmk}
If the weak solution has a jump discontinuity on a curve $(s(t),t)$, e.g. say $u_-(t)\not= u_+(t)$ or $w_-(t)\not=w_+(t)$ then we get the following extended \RH condition \begin{equation}\label{eq: hrhcond}
    \frac{u_-(t)-u_+(t)}{u_-(t)-u_+(t)+w_-(t)-w_+(t)} = s'(t).
\end{equation}

\noindent
The entropy condition \eqref{eq: entopkruz} is also extended to the following one\begin{equation}
    \int_{\R_T} (|u-k|+|w-\hat{k}|)\phi_t+|u-k|\phi_x ~dxdt \geq 0,
\end{equation} for every $\phi$ non negative, smooth and with compact support in $\R \times (0,T)$ and for every couple $(k,\hat{k})\in {\cal L}$.


\section{The Riemann problem with Play hysteresis}\label{S4}
We study the Riemann problem for \eqref{eq: hpde} where $u_0(x)$, $w_0(x)$ consist of two constant states separated by a discontinuity in the origin. We recall that, for regular input and output, if $(u,w)$ belongs to the interior of $\cal L$ then $w_t=0$, whereas when $(u,w)$ is on the boundary of $\cal L$ then $w_t=u_t$ whenever $w_t\neq0$. Note that on the boundary, we may have $w_t=0$ and $u_t\neq0$ only for negligible times $t$ (because in such a case, the pair would immediately enter the interior of $\cal L$ or move along the boundary).

 By the cases discussed here above, we can rewrite \eqref{eq: hpde} as follows \begin{equation}
    \begin{cases}\label{eq: switchpde}
        u_t+u_x=0, \quad& |u-w|<a,\\
        u_t+\frac{1}{2} u_x =0, \quad & |u-w|=a,\\
        w=[\F(u,w_0)], \\
        u(x,0)=u_0(x),\\
        w(x,0)=w_0(x).
    \end{cases}
\end{equation}

\noindent Of course the initial data must satisfy $|u_0(x)-w_0(x)|\le a$ for almost all $x$. Our goal is to rewrite \eqref{eq: switchpde} as a unique conservation law with piece-wise linear flux. 
We consider two different cases, \textit{a) $|u_0(x)-w_0(x)|<a$} and \textit{b) $|u_0(x)-w_0(x)|=a$}.\\ 
\par \textit{a) $|u_0(x)-w_0(x)|<a$ a.e. $x$} \\
Let us consider initial data \begin{equation}\label{eq: u_0}
    u_0(x):=\begin{cases}
        u_l \quad x<0,\\
        u_r \quad x>0
    \end{cases}
\end{equation} with $u_l<u_r$ and \begin{equation}\label{eq: w_0}
    w_0(x):=\begin{cases}
        w_l \quad x<0,\\
        w_r \quad x>0.
    \end{cases}
\end{equation}
\noindent
hence in this case it is $|u_l-w_l|<a$ and $|u_r-w_r|<a$. At the intial time $t=0$ the PDE is then
\begin{equation}
    u_t+u_x=0, \quad \forall x\in \R.
\end{equation} If there was no hysteresis the PDE above is solved by the travelling wave solution $u(x,t)=u_0(x-t).$ Hence, if we fix $x\in \R$ we are expecting our solution $u$ either to remain equal to $u_l$ for $x<0$ or to decrease from $u_r$ to $u_l$ after a time $t=x$, when $x>0$. 

We have two subcases.

a1) If $a$ is such that $w_r-a \leq u_l$ then the jump from $u_r$ to $u_l$ is such that the couple $(u,w)$ will always remain in the internal region of hysteresis, and so $w_t=0$ for all $x$ and $t$. Hence we do not experience the effect of hysteresis and so we get the trivial solution, $u(x,t)=u_0(x-t)$ and $w(x,t)\equiv w_0(x).$ 

a2) Suppose instead $w_r-a > u_l$. For $x<0$, $u$ remains constantly equal to $u_l$ and so does $w$. For $x>0$, we expect our solution to decrease, so if we look at the hysteresis diagram we notice that for $w_r-a<u< u_r$ we have $w_t=0$, whereas for $u_l<u<w_r-a$ we are on the boundary and so $w_t=u_t$. See Figure \ref{fig: due casi}. 
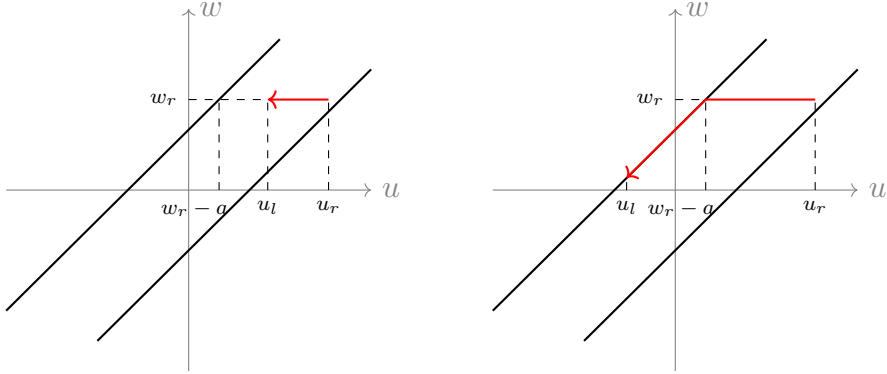
\begin{figure}
    \centering
    \begin{tikzpicture}[scale=0.8]
         \draw[->, gray] (-3, 0) -- (3, 0) node[right] {$u$}; 
    \draw[->, gray] (0, -3) -- (0, 3) node[right] {$w$};
    \draw[-, thick] (-3, -2) -- (1.5, 2.5);
    \draw[-, thick] (-1.5, -2.5) -- (3, 2); 
    
    \draw[-,dashed](2.3,0)--(2.3,1.5);
    \draw[-,dashed](1.3,0)--(1.3,1.5);
    \draw[-,dashed](0.5,0)--(0.5,1.5);
    \draw[-,dashed](0,1.5)--(1.3,1.5);
    \draw[->,red, thick] (2.3,1.5)--(1.3,1.5);

    \node[text = black, below] (r) at (2.3,0) {\scriptsize{$u_r$}};
    \node[text = black, below] (r) at (1.3,0) {\scriptsize{$u_l$}};
    \node[text = black, below] (r) at (0.1,0) {\scriptsize{$w_r-a$}};
    \node[text = black, left] (r) at (0,1.5) {\scriptsize{$w_r$}};

    \draw[->, gray,xshift=8cm] (-3, 0) -- (3, 0) node[right] {$u$}; 
    \draw[->, gray,xshift=8cm] (0, -3) -- (0, 3) node[right] {$w$};
    \draw[-, thick,xshift=8cm] (-3, -2) -- (1.5, 2.5);
    \draw[-, thick,xshift=8cm] (-1.5, -2.5) -- (3, 2); 
    
    \draw[-,dashed,xshift=8cm](2.3,0)--(2.3,1.5);
    \draw[-,dashed,xshift=8cm](-0.8,0)--(-0.8,0.2);
    \draw[-,dashed,xshift=8cm](0.5,0)--(0.5,1.5);
    \draw[-,dashed,xshift=8cm](0,1.5)--(0.5,1.5);
    \draw[->,red, thick,xshift=8cm] (2.3,1.5)--(0.5,1.5)--(-0.8,0.2);

    \node[text = black, below] (r) at (10.3,0) {\scriptsize{$u_r$}};
    \node[text = black, below] (r) at (7.2,0) {\scriptsize{$u_l$}};
    \node[text = black, below] (r) at (8.1,0) {\scriptsize{$w_r-a$}};
    \node[text = black, left] (r) at (8,1.5) {\scriptsize{$w_r$}};
    
    \end{tikzpicture}
    \caption{In the first picture we see that if $w_r-a<u_l$ then we remain inside the hysteresis region when jumping from $u_r$ to $u_l$; in the second picture, when $w_r-a>u_l$, instead we hit the boundary, so we have $w_t=0$ initially and then $w_t=u_t.$ }
    \label{fig: due casi}
\end{figure}\par Let us define the following function \begin{equation}\label{eq: h}
    h(x,u):=\begin{cases}
        1 \quad & x<0 \text{ or } (x>0 \text{ and }u_r>u>w_r-a),\\
        \frac{1}{2} \quad &x>0 \text{ and } u_l<u<w_r-a.
    \end{cases} 
\end{equation} 
As already pointed out, for $x<0$, the solution is constantly equal to $u_l$ (and the output to $w_l$), hence we can focus on $h$ defined only when $x>0$, which is then only dependent on $u\in[u_l,u_r]$. Moreover, the switching rule \eqref{eq: h} for $h$ actually encodes the hysteresis behavior between $u$ and $w$.
We can then rewrite our initial problem as follows \begin{equation}
    \begin{cases}
        u_t+ h(u)u_x=0,\\
        u(x,0)=u_0(x).
    \end{cases}
\end{equation}  

We can see $h$ as the weak derivative of some piecewise linear flux $g:~[u_l,u_r]\to \R$ of the following form \begin{equation} g(u):=
    \begin{cases}
        \frac{1}{2} u & \quad u_l\leq u\leq w_r-a,\\
        u+c &\quad w_r-a\leq u\leq u_r,
    \end{cases}
\end{equation} where the constant $c$ makes the flux continuous. Also note that $g$ is convex. Our PDE then reads as the following conservation law with piecewise linear flux (compare with \eqref{eq: RMpl}) 
\begin{equation} \label{eq: approx}
    u_t+g(u)_x=0,
\end{equation}

\noindent 
for which we consider the Riemann problem with datum \eqref{eq: u_0}. As in Section \ref{S2}, we find the solution to \eqref{eq: approx} \begin{equation}\label{eq: ucorr}
    u(x,t)=\begin{cases}
        u_l & \quad \frac{x}{t}<\frac{1}{2},\\
        w_r-a & \quad \frac{1}{2}<\frac{x}{t}<1 ,\\
        u_r &\quad 1<\frac{x}{t}.
    \end{cases}
\end{equation}
We then compute $w(x,t)= [\F(u(x,\cdot),w_0(x))](t)$ that, as explained in Section \ref{S1} is, see also Figure \ref{fig: uew}.
\begin{equation}
    w(x,t)=\begin{cases}
        w_l(x) & \quad x<0,\\
        w_r(x) & \quad 0<\frac{x}{t}<\frac{1}{2},\\
        u_l+a & \quad \frac{1}{2}< \frac{x}{t}.
    \end{cases}
\end{equation}
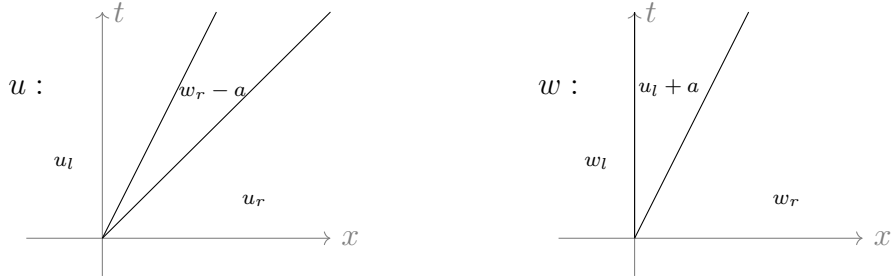
\begin{figure}
    \centering
    \begin{tikzpicture}
    \draw[->, gray] (-1, 0) -- (3, 0) node[right] {$x$}; 
    \draw[->, gray] (0, -0.5) -- (0, 3) node[right] {$t$};
    \draw[-] (0, 0) -- (1.5, 3);
    \draw[-] (0, 0) -- (3, 3);
    \node[text = black,] (r) at (-1,2) {$u:$};
    \node[rectangle, draw = white,
    text = black,fill=white] (r) at (-0.5,1) {\scriptsize{$u_l$}};
    \node[
    text = black] (r) at (1.45,1.95) {\scriptsize{$w_r-a$}};
    \node[rectangle, draw = white,
    text = black,fill=white] (r) at (2,0.5) {\scriptsize{$u_r$}};

    \draw[->, gray,xshift=7cm] (-1, 0) -- (3, 0) node[right] {$x$}; 
    \draw[->, gray,xshift=7cm] (0, -0.5) -- (0, 3) node[right] {$t$};
    \draw[-,xshift=7cm] (0, 0) -- (0, 3);
    \draw[-,xshift=7cm] (0, 0) -- (1.5, 3);
    \node[rectangle, draw = white,
    text = black,fill=white,xshift=7cm] (r) at (-0.5,1) {\scriptsize{$w_l$}};
    \node[
    text = black,xshift=7cm] (r) at (0.45,2) {\scriptsize{$u_l+a$}};
     \node[
    text = black,xshift=7cm] (r) at (-1,2) {$w:$};
    \node[rectangle, draw = white,
    text = black,fill=white,xshift=7cm] (r) at (2,0.5) {\scriptsize{$w_r$}};
    \end{tikzpicture}
    \caption{The solution $u$ and corresponding $w$}
    \label{fig: uew}
\end{figure}

Note that the pair $(u,w)$ as above constructed, satisfies the \RH condition \eqref{eq: hrhcond}. Indeed we have two discontinuities on $u$: the one from $u_r$ to $w_r-a$ is such that $w$ remains constant so \[s'(t)=\frac{u_r-(w_r-a)}{u_r-(w_r-a)}=1;\] the one from $w_r-a$ to $u_l$ is such that also $w$ jumps from $w_r$ to $u_l-a$ hence \[s'(t)=\frac{w_r-a-u_l}{w_r-a-u_l+w_r-(u_l+a)}=\frac{1}{2}.\] We also have a discontinuity on $w$ with slope $0$ between $w_l$ and $u_l+a$ but it still satisfies the \RH condition indeed $u$ is continuous implying that $s'(t)=0.$ Being the \RH condition satisfied by the piece-wise constant pair $(u,w)$, then the latter satisfies the weak formulation of \eqref{eq: hpde}, that is \eqref{eq: hweaksol}. 

\begin{rmk}\label{rmk: weaksolution}
The argumentation above on the \RH conditions, works well not only because we have posed $w={\cal F}(u)$ but mainly because $u$ solves \eqref{eq: approx} where, as already pointed out, hysteresis is somehow encoded in the flux $g$.

As said, $(u,w)$ can be seen as a weak solution of the Riemann problem \eqref{eq: hpde}, \eqref{eq: u_0}, \eqref{eq: w_0}. Also note that, as we already know, the relation $w(x,\cdot)= \F(u(x,\cdot),w_0(x))$, for piece-wise constant functions $t\mapsto(u(x,t),w(x,t))$, is equivalent to \eqref{eq: weakhis}. In the proof of Theorem \ref{teo: hteo1}, we will show that \eqref{eq: weakhis} for piece-wise constant functions, together with the hypothesis of integrability on $\R\times[0,T)$ (which is not satisfied by the solution of the Riemann problem), implies condition \eqref{eq: genweakhis}, giving a weak solution in the sense of Definition \ref{def: hweaksol}.
\end{rmk}

\begin{rmk}\label{rmk: u_l>u_r}
    If the initial datum satisfies $u_l>u_r$, then we end up with a similar situation as above but this time the piecewise linear flux $g$ will be concave, coherently with what said in Section \ref{S2}. The construction of the solution then goes similarly as above.
\end{rmk}
\par 
\textit{b) $|u_0(x)-w_0(x)|=a$ a.e. $x$}\\ Consider the initial states \eqref{eq: u_0},\eqref{eq: w_0} with $u_l<u_r$. 
We have two subcases.

b1) Suppose $w_r=u_r+a$, that is $(u_r,w_r)$ belongs to the upper boundary of $\cal L$. Here we expect the solution $u$ to decrease from $u_r$ to $u_l$ for $x>0$. Then we can notice that the pair $(u,w)$ will always remain on the upper boundary $w=u+a$, see Figure \ref{fig: upper}. The solved PDE is then
\begin{equation}
    u_t+\frac{1}{2}u_x=0.
\end{equation} The solution $u(x,t)=u_0(x-1/2 t)$, and its output $w$, see Figure \ref{fig: b1} are then
\begin{equation}\label{eq: uesempio}
u(x,t)=\begin{cases}
        u_l\quad & 0<\frac{x}{t}<\frac{1}{2}\\
        u_r \quad & \frac{1}{2}<\frac{x}{t}
\end{cases}, \quad
    w(x,t)=\begin{cases}
        w_l\quad & 0<x\\
        u_r+a\quad & 0<\frac{x}{t}<\frac{1}{2}\\
        u_l+a \quad & \frac{1}{2}<\frac{x}{t}.
    \end{cases}
\end{equation}
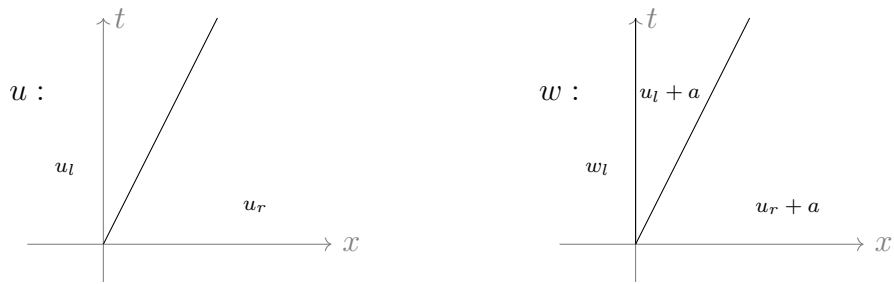
\begin{figure}
    \centering
    \begin{tikzpicture}
    \draw[->, gray] (-1, 0) -- (3, 0) node[right] {$x$}; 
    \draw[->, gray] (0, -0.5) -- (0, 3) node[right] {$t$};
    \draw[-] (0, 0) -- (1.5, 3);
    \node[text = black,] (r) at (-1,2) {$u:$};
    \node[rectangle, draw = white,
    text = black,fill=white] (r) at (-0.5,1) {\scriptsize{$u_l$}};
    \node[rectangle, draw = white,
    text = black,fill=white] (r) at (2,0.5) {\scriptsize{$u_r$}};

    \draw[->, gray,xshift=7cm] (-1, 0) -- (3, 0) node[right] {$x$}; 
    \draw[->, gray,xshift=7cm] (0, -0.5) -- (0, 3) node[right] {$t$};
    \draw[-,xshift=7cm] (0, 0) -- (0, 3);
    \draw[-,xshift=7cm] (0, 0) -- (1.5, 3);
    \node[rectangle, draw = white,
    text = black,fill=white,xshift=7cm] (r) at (-0.5,1) {\scriptsize{$w_l$}};
    \node[
    text = black,xshift=7cm] (r) at (0.45,2) {\scriptsize{$u_l+a$}};
     \node[
    text = black,xshift=7cm] (r) at (-1,2) {$w:$};
    \node[rectangle, draw = white,
    text = black,fill=white,xshift=7cm] (r) at (2,0.5) {\scriptsize{$u_r+a$}};
    \end{tikzpicture}
    \caption{The solution $u$ and corresponding $w$ for the subcase b1)}
    \label{fig: b1}
\end{figure}
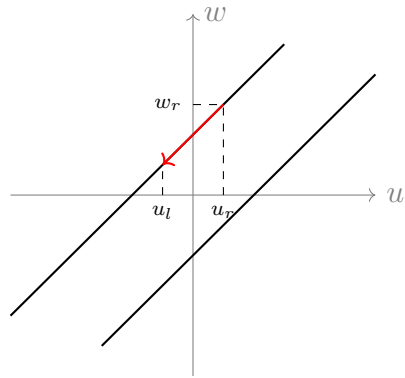
\begin{figure}
    \centering
    \begin{tikzpicture}[scale=0.8]
         \draw[->, gray] (-3, 0) -- (3, 0) node[right] {$u$}; 
    \draw[->, gray] (0, -3) -- (0, 3) node[right] {$w$};
    \draw[-, thick] (-3, -2) -- (1.5, 2.5);
    \draw[-, thick] (-1.5, -2.5) -- (3, 2); 
    
    \draw[-,dashed](0.5,0)--(0.5,1.5);
    \draw[-,dashed](-0.5,0)--(-0.5,0.5);
    \draw[-,dashed](0,1.5)--(0.5,1.5);
    \draw[->,red, thick] (0.5,1.5)--(-0.5,0.5);

    \node[text = black, below] (r) at (-0.5,0) {\scriptsize{$u_l$}};
    \node[text = black, below] (r) at (0.5,0) {\scriptsize{$u_r$}};
    \node[text = black, left] (r) at (0,1.5) {\scriptsize{$w_r$}};
    
    \end{tikzpicture}
    \caption{If we start on the upper boundary and we jump from $u_r$ to $u_l$ then we will always remain on that boundary.}
    \label{fig: upper}
\end{figure}

b2) Suppose $(u_r,w_r)$ to be on the lower boundary of $\cal L$, that is $w_r=u_r-a$. In this case for $x>0$ we have $u$ decreasing from $u_r$ to $u_l$ so the couple $(u(x,\cdot), w(x,\cdot))$ will move internally possibly reaching the upper boundary and following it until $u=u_l$, see Figure \ref{fig: due casi 2}. We then have two further subcases. 

b2i) Suppose $u_l\geq w_r-a$, that is the left-case in Figure \ref{fig: due casi 2}. By \eqref{eq: switchpde}, when $u=u_r$ or $u_l<u<u_r$ the PDE is, respectively, \begin{equation}
    u_t+\frac{1}{2}u_x=0 \quad \text{or} \quad u_t+u_x=0.
\end{equation} So if we define \begin{equation}
    h(u):=\begin{cases}
        \frac{1}{2} & \quad u=u_r\\
        1 & \quad u_l\leq u <u_r,
    \end{cases}
\end{equation} and as in the case a2) consider a conservation law with $g$ piecewise linear flux, then the solution is $u_0(x-t)$, that is \begin{equation}
    u(x,t)=\begin{cases}
        u_l \quad &\frac{x}{t}<1,\\
        u_r \quad &\frac{x}{t}>1,
    \end{cases}
\end{equation} which generates the constant output $w(x,t)\equiv w_0(x)$.

b2ii) Suppose now $u_l<w_r-a$ so for $x>0$ when $u$ decreases from $u_r$ to $u_l$ we hit also the upper boundary at $u=w_r-a$, see the right-case in Figure \ref{fig: due casi 2}. Again by \eqref{eq: switchpde} we have \begin{equation}
    h(u)=\begin{cases}
        \frac{1}{2} \quad & u=u_r,\\
        1 \quad & w_r-a<u<u_r\\
        \frac{1}{2} \quad& u_l<u<w_r-a,
    \end{cases}
\end{equation} which is the derivative of the flux. Then, by solving as in case a2) and subcase b2i) a conservation law with piece-wise linear flux and after computing $w=\F(w,w_0)$, see Figure \ref{fig: b2ii}, we find the pair \begin{equation}
    u(x,t)=\begin{cases}
        u_l \quad & \frac{x}{t}<\frac{1}{2},\\
        w_r-a \quad & \frac{1}{2}<\frac{x}{t}<1,\\
        u_l \quad & \frac{x}{t}>1,
    \end{cases}
    \quad w(x,t)=\begin{cases}
        w_l \quad& x<0,\\
        u_l+a \quad &0<\frac{x}{t}<\frac{1}{2},\\
        u_r-a \quad& \frac{x}{t}>\frac{1}{2}.
    \end{cases}
\end{equation}
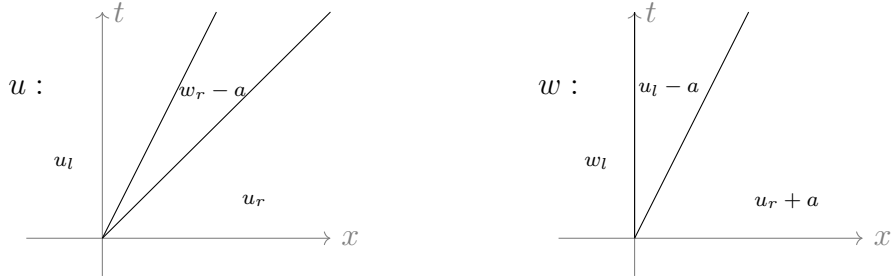
\begin{figure}
    \centering
    \begin{tikzpicture}
    \draw[->, gray] (-1, 0) -- (3, 0) node[right] {$x$}; 
    \draw[->, gray] (0, -0.5) -- (0, 3) node[right] {$t$};
    \draw[-] (0, 0) -- (1.5, 3);
    \draw[-] (0, 0) -- (3, 3);
    \node[text = black,] (r) at (-1,2) {$u:$};
    \node[rectangle, draw = white,
    text = black,fill=white] (r) at (-0.5,1) {\scriptsize{$u_l$}};
    \node[
    text = black] (r) at (1.45,1.95) {\scriptsize{$w_r-a$}};
    \node[rectangle, draw = white,
    text = black,fill=white] (r) at (2,0.5) {\scriptsize{$u_r$}};

    \draw[->, gray,xshift=7cm] (-1, 0) -- (3, 0) node[right] {$x$}; 
    \draw[->, gray,xshift=7cm] (0, -0.5) -- (0, 3) node[right] {$t$};
    \draw[-,xshift=7cm] (0, 0) -- (0, 3);
    \draw[-,xshift=7cm] (0, 0) -- (1.5, 3);
    \node[rectangle, draw = white,
    text = black,fill=white,xshift=7cm] (r) at (-0.5,1) {\scriptsize{$w_l$}};
    \node[
    text = black,xshift=7cm] (r) at (0.45,2) {\scriptsize{$u_l-a$}};
     \node[
    text = black,xshift=7cm] (r) at (-1,2) {$w:$};
    \node[rectangle, draw = white,
    text = black,fill=white,xshift=7cm] (r) at (2,0.5) {\scriptsize{$u_r+a$}};
    \end{tikzpicture}
    \caption{The solution $u$ and corresponding $w$ in subcase b2ii)}
    \label{fig: b2ii}
\end{figure}

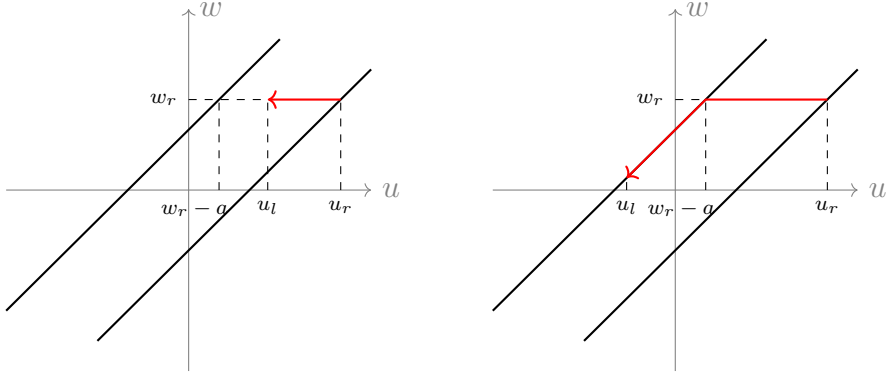
\begin{figure}
    \centering
    \begin{tikzpicture}[scale=0.8]
         \draw[->, gray] (-3, 0) -- (3, 0) node[right] {$u$}; 
    \draw[->, gray] (0, -3) -- (0, 3) node[right] {$w$};
    \draw[-, thick] (-3, -2) -- (1.5, 2.5);
    \draw[-, thick] (-1.5, -2.5) -- (3, 2); 
    
    \draw[-,dashed](2.5,0)--(2.5,1.5);
    \draw[-,dashed](1.3,0)--(1.3,1.5);
    \draw[-,dashed](0.5,0)--(0.5,1.5);
    \draw[-,dashed](0,1.5)--(1.3,1.5);
    \draw[->,red, thick] (2.5,1.5)--(1.3,1.5);

    \node[text = black, below] (r) at (2.5,0) {\scriptsize{$u_r$}};
    \node[text = black, below] (r) at (1.3,0) {\scriptsize{$u_l$}};
    \node[text = black, below] (r) at (0.1,0) {\scriptsize{$w_r-a$}};
    \node[text = black, left] (r) at (0,1.5) {\scriptsize{$w_r$}};

    \draw[->, gray,xshift=8cm] (-3, 0) -- (3, 0) node[right] {$u$}; 
    \draw[->, gray,xshift=8cm] (0, -3) -- (0, 3) node[right] {$w$};
    \draw[-, thick,xshift=8cm] (-3, -2) -- (1.5, 2.5);
    \draw[-, thick,xshift=8cm] (-1.5, -2.5) -- (3, 2); 
    
    \draw[-,dashed,xshift=8cm](2.5,0)--(2.5,1.5);
    \draw[-,dashed,xshift=8cm](-0.8,0)--(-0.8,0.2);
    \draw[-,dashed,xshift=8cm](0.5,0)--(0.5,1.5);
    \draw[-,dashed,xshift=8cm](0,1.5)--(0.5,1.5);
    \draw[->,red, thick,xshift=8cm] (2.5,1.5)--(0.5,1.5)--(-0.8,0.2);

    \node[text = black, below] (r) at (10.5,0) {\scriptsize{$u_r$}};
    \node[text = black, below] (r) at (7.2,0) {\scriptsize{$u_l$}};
    \node[text = black, below] (r) at (8.1,0) {\scriptsize{$w_r-a$}};
    \node[text = black, left] (r) at (8,1.5) {\scriptsize{$w_r$}};
    
    \end{tikzpicture}
    \caption{The two possible cases whether $w_r-a\leq u_l$ or $w_r-a>u_l.$ }
    \label{fig: due casi 2}
\end{figure}

As  for the case a), \RH conditions can be also derived for case b) and moreover similar remarks as Remark \ref{rmk: weaksolution} and Remark \ref{rmk: u_l>u_r} hold.

\begin{rmk}
    Since for $x<0$, $u(x,t)\equiv u_l$ and so $w(x,t)\equiv w_l$, then the conditions for the cases a) and b) reduce respectively to $|u_r-w_r|<a$ and $|u_r-w_r|=a$. Consequently, the study above covers all possible cases. 
\end{rmk}
\begin{rmk} \label{rmk: compRP}
    For the subcases a1) and b2i), as said, the solution is just a rigid movement of the initial data. Instead for the subcases a2) and b2ii) we easily see that $u_l<w_r-a\leq u_r$ and also $u_l+a$ is between $w_r$ and $w_l$, since $w_l\in [u_l-a,u_l+a]$. For the subcase b1) again holds that $u_l+a$ is between $w_r$ and $w_l$.
    These facts are important as they imply that for every case $Var(u(\cdot,t))=u_r-u_l=Var(u_0(\cdot))$ and $Var(w(\cdot,t))=w_r-w_l=Var(w_0(\cdot))$ for every $t$, where $Var$ denotes the total variation on $\R$ with respect to the spacial variable. 
    The fact that the total variation does not increase in time is important and will give a compactness tool to prove the existence to the general Cauchy problem as in Section \ref{S5}.
\end{rmk}


\section{The general initial data Cauchy problem}\label{S5}

The goal of this section is to construct weak solution in the sense of Definition \ref{def: hweaksol}. by using the Wave Front Tracking Method.

As already said, such method consist of approximating initial data with piece-wise constant function, so we consider first \begin{equation}\label{eq: upc}
    u_0(x)= \sum_{i=1}^N u_i \1_{(x_{i-1},x_i)}
\end{equation} and \begin{equation}\label{eq: wpc}
    w_0(x)= \sum_{i=1}^N w_i \1_{(x_{i-1},x_i)}
\end{equation} such that $|u_0(x)-w_0(x)|\leq a$ where $-\infty=x_0<x_1<\dots< x_N = +\infty$. \par In order to solve the Cauchy problem \eqref{eq: hpde}, \eqref{eq: upc}, \eqref{eq: wpc}, we give the following heuristics idea, which differs from a classical one by the presence of the hysteretic term $w$, see Figure \ref{fig: wavefront}: \begin{enumerate}
    \item We solve $N-1$ Riemann problems at time $t=0$ centered in the points $x_1,\dots,x_{N-1}$, which generate discontinuity lines for $u$ and $w$; by the analysis in Section \ref{S4}, we know that the discontinuity lines for $u$ are lines with slopes $1$ or $1/2$ (in the plane $t-x$), whereas the ones for $w$ with slope $0$ or $1/2$; moreover the discontinuity lines of slope $1/2$ are always discontinuity lines for both $u$ and $w$.
    \item Let us denote by $\tau_1$ the first time that either a discontinuity line of $u$ impinges a discontinuity line of $w$ with slope $0$ or two discontinuity lines of $u$ impinge themselves. The pair $(u,w)$ found in the previous point is a weak solution of \eqref{eq: hpde} for $t\leq \tau_1$ (i. e. \eqref{eq: hweaksol} and $w={\cal F}(u)$); note that $\tau_1>0$ exists since the number of constancy intervals is finite;
    \item we consider $u(\cdot, \tau_1)$ and $w(\cdot,\tau_1)$ as new initial conditions, since they are still piecewise constant, we can solve again a finite number of Riemann problems at $\{t=\tau_1\}$. So we extended $u$ and $w$ for small times after $\tau_1$;
    \item we proceed in way extending at each step the solution for larger times.
\end{enumerate}

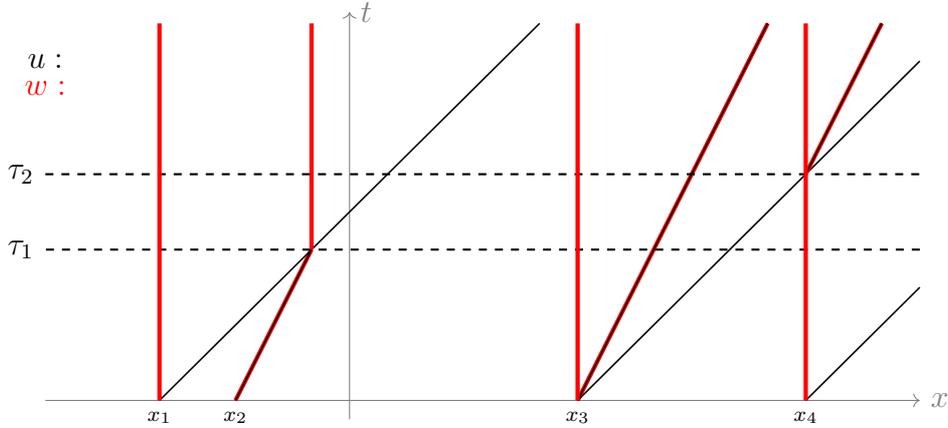
\begin{figure}
    \centering
    \begin{tikzpicture}[scale =0.5]
    \draw[->, gray] (-8, 0) -- (15, 0) node[right] {$x$}; 
    \draw[->, gray] (0, -0.5) -- (0, 10.3) node[right] {$t$};

    \draw[-,red, ultra thick] (-3, 0)--(-1,4);
    \draw[-,  semithick,] (-5, 0)--(5,10);
    \draw[-,  semithick,] (-3, 0)--(-1,4);
    \draw[-,red, ultra thick] (6, 0)--(11,10);
    \draw[-,  semithick,] (6, 0)--(11,10);
    \draw[-,  semithick,] (6, 0)--(15,9);
    \draw[-,red, ultra thick] (12,6)--(14,10);
    \draw[-,  semithick,] (12,6)--(14,10);
    \draw[-,  semithick,] (12, 0)--(15,3);
    
    \draw[-,dashed, thick]   (15, 4)--(-8, 4) node[left] {$\tau_1$};
    \draw[-,dashed, thick] (-8, 6) node[left] {$\tau_2$} -- (15, 6) ;

    \draw[-,red, ultra thick] (-5,0)--(-5,10);
    \draw[-,red, ultra thick] (6,0)--(6,10);
    \draw[-,red, ultra thick] (12,0)--(12,10);
    \draw[-,red, ultra thick] (-1,4)--(-1,10);
    
    \node[text = black] (r) at (-8,9) {$u:$};
    \node[text = red] (r) at (-8,8.3) {$w:$};
    \node[text = black, below] at (-5,0) {\scriptsize{$x_1$}};
    \node[text = black, below] at (-3,0) {\scriptsize{$x_2$}};
    \node[text = black, below] at (6,0) {\scriptsize{$x_3$}};
    \node[text = black, below] at (12,0) {\scriptsize{$x_4$}};

    \end{tikzpicture}
    \caption{A possible solution generated by the above procedure, where we have interactions at time $\tau_1$ and $\tau_2$. In black the discontinuity lines of slope $1$ of $u$, in red ones of $w$ with slope $0$, and both colors for the common lines with slope $1/2$.}
    \label{fig: wavefront}
\end{figure}
Next lemma formalizes the heuristics above.
\begin{lem}\label{lemma: ph1}
    For the Cauchy problem \eqref{eq: hpde}, \eqref{eq: upc}, \eqref{eq: wpc}, with $T=+\infty$, arguing as above, we can construct two piece-wise constant functions $u,w: \R \times [0,+\infty)$ such that: \begin{enumerate}[i)]
    \item The couple $(u,w)$ satisfies the weak formulation of the PDE \eqref{eq: hweaksol} and the relation $w=\F(u,w_0)$ for almost every $x$.
    \item For fixed $t\in [0,+\infty)$ we have that \begin{equation}\label{eq: plemma1}
        Var(u(\cdot,t))\leq Var (u_0(\cdot)), \quad Var(w(\cdot,t))\leq Var(w_0(\cdot)),
    \end{equation} where $Var$ denotes the total variation on $\R$ of the functions with respect to the space variable. 
    \item For $t,t'\in [0,+\infty)$ the following estimates hold \begin{equation}\label{eq: plemma2}
        \int_{-\infty}^{+\infty} |u(x,t)-u(x,t')|~dx \leq Var(u_0(\cdot))|t-t'|
    \end{equation} and \begin{equation}\label{eq: plemma3}
        \int_{-\infty}^{+\infty} |w(x,t)-w(x,t')|~dx \leq \frac{1}{2} Var(w_0(\cdot))|t-t'|.
    \end{equation}
\end{enumerate}
\end{lem}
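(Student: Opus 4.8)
The plan is to turn the four-step heuristic into an induction over the interaction times $0=\tau_0<\tau_1<\tau_2<\cdots$, and then to read off the estimates from the structure of the Riemann fans of Section \ref{S4}. \emph{Construction.} At $t=0$ I would solve the $N-1$ Riemann problems centred at $x_1,\dots,x_{N-1}$ by the case analysis of Section \ref{S4}; each produces at most three fronts, with $u$-fronts of speed $1$ or $\tfrac12$ and $w$-fronts of speed $0$ or $\tfrac12$, the speed-$\tfrac12$ ones being common to both. Since the fans occupy disjoint cones, the resulting pair $(u,w)$ satisfies \eqref{eq: hweaksol} on the strip $\{0\le t<\tau_1\}$: the \RH condition \eqref{eq: hrhcond} holds across every front, and a piecewise constant field fulfilling \RH along all of its discontinuity curves is a weak solution, while $w=\F(u,w_0)$ holds for a.e.\ $x$ by construction. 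Letting $\tau_1$ be the first interaction time (positive, since there are finitely many fronts at $t=0^+$), I take $u(\cdot,\tau_1),w(\cdot,\tau_1)$ as new piecewise constant data and repeat; the relation $w=\F(u,w_0)$ survives each restart thanks to the semigroup property \eqref{eq: semigroup}, which glues the locally defined outputs into $\F(u(x,\cdot),w_0(x))$ on all of $[0,+\infty)$.

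\emph{Global existence is the main obstacle.} The delicate point, departing from classical wave-front tracking, is that an interaction may create more fronts than it removes, so the induction cannot be closed by merely counting fronts. I would instead first show that the values attained by $u$ and $w$ lie in a fixed finite set: by the sandwiching in Remark \ref{rmk: compRP} every newly created value remains in $[\min_i u_i,\max_i u_i]$, resp.\ $[\min_i w_i,\max_i w_i]$, while structurally each new value differs from a previous one by $\pm a$; hence all values belong to $(\{u_i\}\cup\{w_i\})+a\mathbb{Z}$ intersected with a bounded interval, which is finite. With finitely many values, finitely many admissible jumps, and only the three speeds $0,\tfrac12,1$, the interaction geometry is of finite type, and I would conclude by a standard interaction-counting argument that only finitely many interactions occur in each $[0,T]$; thus the $\tau_k$ do not accumulate and the construction covers $[0,+\infty)$. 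This is the step I expect to require the most care.

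\emph{Total variation.} For estimate \eqref{eq: plemma1} I would use Remark \ref{rmk: compRP}: a single Riemann fan is monotone between its outer states, contributing exactly $|u_{i+1}-u_i|$ to $Var(u(\cdot,t))$ and $|w_{i+1}-w_i|$ to $Var(w(\cdot,t))$, so summing over the disjoint fans gives $Var(u(\cdot,0^+))=Var(u_0)$ and $Var(w(\cdot,0^+))=Var(w_0)$. At each interaction the new Riemann problem resolves only the net jump between the outer states of the colliding fronts, again monotonically, whence the local variation after the interaction is at most the sum of the incoming strengths, with equality unless cancellation occurs. As the variation is constant on the strips between consecutive interactions, both $t\mapsto Var(u(\cdot,t))$ and $t\mapsto Var(w(\cdot,t))$ are non-increasing, which is \eqref{eq: plemma1}.

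\emph{Lipschitz continuity in $L^1$.} For $t<t'$ with no interaction in between, a $u$-front of strength $|\Delta u|$ and speed $\sigma\le 1$ sweeps a strip of width $\sigma(t'-t)$, contributing $|\Delta u|\,\sigma\,(t'-t)$ to $\int_{\R}|u(x,t)-u(x,t')|\,dx$; summing over fronts and invoking the variation bound just proved yields $Var(u_0)\,|t-t'|$, and inserting the interaction times as intermediate points extends this to all $t,t'$, giving \eqref{eq: plemma2}. The argument for \eqref{eq: plemma3} is identical, the sole difference being that $w$ jumps only across fronts of speed $0$ or $\tfrac12$, so its maximal propagation speed $\tfrac12$ produces the factor $\tfrac12$ in front of $Var(w_0)$.
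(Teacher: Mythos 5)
Your construction, the total-variation bound, and the $L^1$-Lipschitz estimates follow the paper's proof essentially verbatim (including the factor $\tfrac12$ for $w$ coming from the maximal speed of its fronts), and those parts are fine. The genuine gap is the step you yourself flag: the finiteness of the interaction times. Appealing to ``a standard interaction-counting argument'' does not close it, because the standard arguments (a non-increasing count of fronts, or a Glimm-type functional built on wave numbers and strengths) rely precisely on the feature that fails here: as the paper stresses in its introduction, an interaction of a $u$-front with a $0$-slope $w$-front can \emph{increase} the number of fronts while leaving $Var(u(\cdot,t))$ unchanged (see Figure \ref{fig: interazioneC2}). Finitely many attainable values, finitely many admissible jumps and three speeds do not by themselves exclude an infinite cascade of such splittings, so ``finite type'' is not a conclusion but a restatement of what must be proved.

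The paper closes this with two dedicated lemmas, and the mechanism is more specific than what you describe. For $u$--$u$ interactions (Lemma \ref{lem: discu}): the hysteresis structure forces the intermediate state $u^*$ between the two colliding fronts to lie \emph{outside} the interval with endpoints $u_l$ and $u_r$ (one jump moves along a boundary of $\mathcal{L}$, the other keeps $w$ fixed), so each such interaction strictly decreases $Var(u(\cdot,t))$ by at least $2\delta$, where $\delta>0$ as in \eqref{eq: delta} is the minimal gap of the finite value set $IM(u)$ that you correctly identified; since the variation is finite and non-increasing, only finitely many $u$--$u$ interactions can occur. For $u$--$w$ interactions (Lemma \ref{lem: discw}): new $0$-slope $w$-fronts are created only at $t=0$ or at $u$--$u$ interactions, hence their total number is a priori bounded; each $u$-front has strictly positive speed, so it meets a given stationary $w$-front at most once, and the positive-slope geometry prevents a front from re-entering the chain of fronts it generated. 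Without these two arguments (or a substitute for them), your induction is not known to terminate, and $u,w$ are not defined on all of $[0,+\infty)$.
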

\begin{proof}
    At $t=0$, we solve $N-1$ Riemann problems as described previously and we compute the couple $(u,w)$ for $t$ smaller then $\tau_1.$ By the results of Section \ref{S4}, we know that \eqref{eq: hweaksol} holds and  $w=\F(u,w_0)$ for every $x$ possibly except $x_1,\dots, x_{N-1}$. By Remark \ref{rmk: compRP}, \eqref{eq: plemma1} holds for $t\in [0,\tau_1)$ as an equality. Inequality \eqref{eq: plemma2} also holds for $t,t' \in [0,\tau_1)$. Again, a possible proof of it differs from a standard case (\cite{DCP}, \cite{AB3}, \cite{HH}) by the presence of the hysteresis in the equation. We give here an idea of that for the single Riemann problem centered at $0$ with values $u_l$ and $u_r$ as in Section \ref{S4}.  Considering the set $R:=\{ (x,s)~|~ 1/2\leq x/s\leq 1, ~ 0\leq s \leq \tau_1\}$, we can notice that $u(\cdot,t)$ and $u(\cdot,t')$ are equal to $u_l$ at the left of $R$ and to $u_r$ at the right of $R.$ Internally they may be equal either to $u_l$, $u_r$ or $w_1\pm a$ (see for example \eqref{eq: ucorr} or \eqref{eq: uesempio}), here we will denote this internal value with $u^*$. Note that, in any case, $u^*$ is in between $u_l$ and $u_r$. It is easy to see that if we only have one discontinuity $u_-\not= u_+$, with slope $1$ then \begin{equation}\label{eq: dimlem1}
            \int_{-\infty}^{+\infty} |u(x,t)-u(x,t')|~dx = |t-t'|~|u_- -u_+|.
    \end{equation} If instead we have only one discontinuity with slope $1/2$ then \begin{equation}\label{eq: dimlem2}
        \int_{-\infty}^{+\infty} |u(x,t)-u(x,t')|~dx = \bigg|\frac{t}{2}-\frac{t'}{2}\bigg|~|u_--u_+|\leq |t-t'|~|u_--u_+|.
    \end{equation} Since we may have at most $2$ discontinuities then \eqref{eq: plemma2} follows from \eqref{eq: dimlem1} and \eqref{eq: dimlem2}, indeed  \[\int_{-\infty}^{+\infty} |u(x,t)-u(x,t')|~dx\leq |t-t'|(|u_r-u^*|+|u^*-u_l|)=|t-t'|~|u_r-u_l|,\] where $|u_l-u_r|=Var(u_0)$. 
    
    Inequality \eqref{eq: plemma3} is treated in the same way with the difference that discontinuities of $w$ have either slope $0$ or $1/2$ so the maximum is $1/2.$ \par The lemma is proven for $t\in [0,\tau_1)$ now we need to define $u$ and $w$ for $t=\tau_1$ and use them as new initial data. As a standard non-hysteresis case, the piece-wise constant functions $u(\cdot,\tau_1)$, $w(\cdot,\tau_1)$ can be constructed as limits as $t\to\tau_1^-$ without increasing the total variations (see again \cite{DCP}). Moreover, for $x\not\in\{x_1,...,x_{N-1}\}\cup\{y|(y,\tau_1)\ \text{belongs to discontinuity lines started at $t=0$}\}$, it holds $w(x,\tau_1)=[{\cal F}(u(x,\cdot),w_0(x)](\tau_1)$ as $u$ and $w$ are constant in  $(x,t)$ for $t<\tau_1$ sufficiently close to $\tau_1$, see Figure \ref{fig: wavefront}. Also note that the excluded values of $x$ are a finite quantity. 

    Taking the piece-wise constant functions $u(\cdot,\tau_1)$, $w(\cdot,\tau_1)$ as initial conditions at $\tau_1$ and repeating the reasoning above, we are able to find a new instant $\tau_2>\tau_1$ and to extend $u,w$ for $t\in [\tau_1,\tau_2)$, satisfying \eqref{eq: plemma1}, \eqref{eq: plemma2} and \eqref{eq: plemma3} in $[0,\tau_2)$. Also note that, by the semigroup property (see Remark \ref{rmrk: regulated}), $w=\F(u,w_0)$ in $[0,\tau_2)$ for every $x$ different from the one excluded previously. 

    Then we apply such extension procedure recursively, getting a sequence of instants $\tau_1<\tau_2<\tau_3<\dots$. Next two lemmas imply that the number of such instants (and hence of recursive cases) is finite and hence we actually may perform the extension up to whole time-line $[0,+\infty)$. Finally note that at each step we exclude a finite number of $x$ such that $w\not=\F(u,w_0)$, hence $w=\F(u,w_0)$ in $[0,+\infty)$ for almost every $x$
\end{proof}
\begin{lem}\label{lem: discu}
    The number of discontinuity lines of $u$ intersecting each other is finite. 
\end{lem}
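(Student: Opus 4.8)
The plan is to convert the statement into a counting argument: I will show that $u$ takes values in a fixed finite set, deduce a uniform bound on the number of fronts present at any instant, and then control the collisions with a monotone integer-valued interaction potential. I would start from the geometric reduction. By the analysis of Section \ref{S4} every discontinuity line of $u$ propagates with speed either $1$ or $1/2$, so in the $(x,t)$–plane the $u$-fronts form two families of mutually parallel half-lines: two fronts of the same family never meet, and a speed-$1$ and a speed-$1/2$ front meet in at most one point. Consequently every intersection of $u$-fronts is a faster speed-$1$ front catching up, from the left, with a slower speed-$1/2$ front, and it suffices to bound the number of such catching-up events.

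Next I would prove that $u$ and $w$ take values in finite sets $S_u,S_w$. By the Riemann analysis of Section \ref{S4}, every newly created value of $u$ equals $w\pm a$ for some current output value $w$ (it is the abscissa of the kink of the piecewise-linear flux $g$), and every new value of $w$ equals $u\pm a$; moreover, by Remark \ref{rmk: compRP}, each Riemann solution keeps the values of $u$ and $w$ inside the range of its own data, so $u(\cdot,t)$ and $w(\cdot,t)$ remain in the bounded intervals fixed by the initial data. Hence the admissible values lie in the bounded lattices $\{u_i+ka,\ w_j+ka\ :\ k\in\mathbb{Z}\}$, which are finite. Writing $\delta>0$ for the least gap between distinct elements of $S_u$ and using that $Var(u(\cdot,t))=Var(u_0)$ is constant in time (Lemma \ref{lemma: ph1}), every profile $u(\cdot,t)$ has at most $M:=Var(u_0)/\delta$ jumps, so at most $M$ fronts are present at each instant.

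I would then introduce the interaction potential
\[
Q(t):=\#\{(\alpha,\beta)\ :\ \alpha\ \text{has speed } 1,\ \beta\ \text{has speed } 1/2,\ \alpha\ \text{lies to the left of}\ \beta\},
\]
so that $0\le Q(t)\le M^2$, and $Q$ is integer-valued and constant between collisions (the set of approaching pairs does not change while the fronts travel at fixed speeds). At a speed-$1$/speed-$1/2$ collision the two fronts merge and I resolve the Riemann problem for the merged states as in Section \ref{S4}; since $g$ carries only the two slopes $1/2$ and $1$, the outgoing fan consists of at most one speed-$1/2$ and one speed-$1$ front, ordered so that the faster one lies to the right (hence the outgoing internal pair is non-approaching). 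A direct bookkeeping should then show that the colliding approaching pair is destroyed, while every other approaching relation of the two incoming fronts is merely \emph{transferred} to the corresponding outgoing front; thus $Q$ drops by exactly one at each $u$-$u$ collision.

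I expect the main obstacle to be precisely this bookkeeping: because the number of $u$-waves may increase at interactions (as noted in the Introduction), one must verify carefully that no genuinely new approaching pair with the surrounding fronts is created when waves are produced. A related subtlety is that $Q$ can be \emph{raised} at interactions of the other kind — when a $u$-front meets a stationary (slope-$0$) discontinuity line of $w$ — so the argument is not self-contained: invoking the companion lemma that bounds the number of such interactions, one observes that between two consecutive interactions of that type $Q$ can only decrease, while each of the finitely many such interactions raises $Q$ by at most $M^2$. Summing these contributions bounds the total number of speed-$1$/speed-$1/2$ collisions, which is the assertion; the strict decrease of $Q$ at each $u$-$u$ collision remains the technical heart of the proof.
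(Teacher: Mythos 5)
Your reduction to a finite value set and the quantity $\delta$ is exactly the paper's starting point, but from there your argument has a genuine circularity. You bound the increases of your interaction potential $Q$ at $u$--$w$ crossings by ``invoking the companion lemma'' (Lemma \ref{lem: discw}); in the paper, however, the proof of Lemma \ref{lem: discw} itself rests on Lemma \ref{lem: discu}: the a-priori bound on the number of $0$-slope discontinuities of $w$ is obtained precisely because each new one is born at a $u$--$u$ collision, and these are finitely many by the present lemma. So as written your proof assumes what the companion lemma needs from this one. On top of that, the step you yourself flag as the ``technical heart'' --- that $Q$ drops by at least one at each collision and that the outgoing waves create no new approaching pairs --- is left unproven, and your assertion that $Var(u(\cdot,t))=Var(u_0)$ is \emph{constant} in time is false (it holds for a single Riemann fan, Remark \ref{rmk: compRP}, but fails exactly at the collisions you are counting); only the inequality of Lemma \ref{lemma: ph1} is available, which fortunately is all you use.

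The missing idea that makes the potential (and the appeal to Lemma \ref{lem: discw}) unnecessary is a hysteresis-specific geometric fact: when a speed-$1$ front overtakes a speed-$1/2$ front, the right jump $u_r\to u^*$ is a jump of $(u,w)$ \emph{along} a boundary of $\cal L$ (so $w$ moves with $u$), while the left jump $u^*\to u_l$ keeps $w$ constant; tracing these two moves in the $(u,w)$-plane (Figure \ref{fig: interazioneC1}) forces $u^*$ to lie \emph{outside} the interval between $u_l$ and $u_r$. Hence the collision, which replaces the two fronts by a single one, makes $Var(u(\cdot,t))$ drop by
\[
|u_r-u^*|+|u^*-u_l|-|u_r-u_l|\ \ge\ 2\delta\ >\ 0,
\]
and since the total variation is finite and non-increasing the number of $u$--$u$ collisions is at most $Var(u_0)/(2\delta)$, with no interaction potential and no forward reference to Lemma \ref{lem: discw}. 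I recommend you establish this exclusion of $u^*$ from $[\min(u_l,u_r),\max(u_l,u_r)]$ and let the quantized decay of the total variation do the counting.
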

\begin{proof}
    Let us denote by $u_1,\dots,u_N$ and $w_1,\dots,w_K$ the initial values of $u$ and $w$ respectively. At every step-instant $\tau_k$, by solving the Riemann problems, we end up with $u$ taking values in the sets $\{\text{initial values of $u$}\}$ and $\{ w \pm a\}$ remaining always between $u_m:= \min u_i$ and $u_M := \max u_i$, see e.g. \eqref{eq: ucorr} or \eqref{eq: uesempio}. Similarly, $w$ takes values in the sets $\{\text{initial values of $w$}\}$ and $\{ u \pm a\}$. We conclude that, at every time-step $\tau_k$, for almost every $(x,t)\in\R\times[0,\tau_k)$ $u(x,t)$ belongs to \[\bigcup_{\substack{i=1,\dots,N \\ j=1,\dots,K}} \bigcup_{k\in \N} \left(\{u_i \pm k a\} \cup \{w_j\pm ka\}\right) ~\bigcap~ [u_m,u_M]=:IM(u)\] which does not depend on the time-step $\tau_k$ and necessarily consists of a finite number of points. In particular \begin{equation}\label{eq: delta}
    \delta := \min \left\{ |x-y|~\bigg| ~ x,y \in IM(u), x\not= y\right\}>0.\end{equation} 
    
    If two discontinuity lines of $u$ cross each other, then the (lower) one must have slope $1/2$ and the (upper) one slope $1$ (see Figure \ref{fig: interazioneC1}-right). We denote by $u_l$, $u_r$ and $u^*$ the values of $u$ on the left, on the right and between the two curves, respectively. Such setting tell us that the first jump between $u_r$ and $u^*$ implies a jump for the corresponding pair $(u,w)$ between two points belonging to the same boundary of $\cal L$, and in particular that $w$ changes accordingly to $u$; whereas the second one, from $u^*$ to $u_l$, implies a jump for $(u,w)$ between two points with the same ordinate $w$ (see Figure \ref{fig: interazioneC1}-left). From this we conclude that $u^*$ can not be between $u_l$ and $u_r$. Also note that the Riemann problem solved at the intersecting point results in a single discontinuity wave between $u_l$ and $u_r$ with either slope $1/2$ or $1$, depending on the mutual position of $u_r$ and $u_l$ and on their distance. In particular, by what said above, we have that the total variation of the solution $u$, in $x$ for $t$ fixed, after the intersection point decreases at least by $2 \delta$, with $\delta$ as in \eqref{eq: delta}. Indeed, before it was $|u_r-u^*|+|u^*-u_l|$ and after it is $|u_r-u_l|$, being $|u_r-u^*|+|u^*-u_l|-|u_r-u_l|\ge 2\delta>0$.  \par Since the total variation of $u$ at time $t=0$ is finite and it is non increasing in time (see \eqref{eq: plemma1}) then we cannot have an infinite number of such interactions.

    We finally note that the corresponding jump of $w$ before the intersection was a jump between two values $w_r$ and $w_l$ and after becomes a jump between $w_r$ and a new intermediate value $w^*$ (see Figure \ref{fig: interazioneC1}). In particular new $0$-slope discontinuity in $w$ may arise.
\end{proof}
\begin{lem}\label{lem: discw}
The number of discontinuity lines of $u$ intersecting the discontinuity lines of $w$ is finite.
\end{lem}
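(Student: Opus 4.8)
The plan is to control the $u$--$w$ interactions by a Glimm-type interaction potential rather than by the total variation alone. Indeed, inspecting the explicit Riemann solutions of Section \ref{S4} one sees that a slope-$1$ line of $u$ can cross a stationary (slope-$0$) line of $w$ \emph{transparently}: the resolved Riemann problem merely reproduces the same $w$-jump at the same abscissa, so $Var(w(\cdot,t))$ is left unchanged and the argument of Lemma \ref{lem: discu} cannot be repeated verbatim. First I would record, exactly as in the proof of Lemma \ref{lem: discu}, that $w$ ranges in a finite set $IM(w)\subset\{\text{initial values of }w\}\cup\{u_i\pm ka\}$, so that every jump of $w$ has size at least $\delta_w:=\min\{|p-q|\mid p,q\in IM(w),\ p\neq q\}>0$; together with the gap $\delta_u:=\delta$ of \eqref{eq: delta} for $u$, this yields a uniform lower bound on the strength of each discontinuity.

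For each $t$ outside the (at most countable) set of interaction times I would set
\[
Q(t):=\sum_{(L,W)\ \text{approaching}} s_u(L)\, s_w(W),
\]
where $s_u(L)$ is the size of the $u$-jump across a line $L$ carrying a jump of $u$ (slope $1$ or $1/2$), $s_w(W)$ is the size of the $w$-jump across a line $W$ carrying a jump of $w$ (slope $0$ or $1/2$), and the pair $(L,W)$ is counted only when $L$ lies to the left of $W$ and is strictly faster, i.e. the two lines are approaching. Since the two families of factors sum to the respective total variations, \eqref{eq: plemma1} gives the a priori bound $Q(t)\le Var(u(\cdot,t))\,Var(w(\cdot,t))\le Var(u_0)\,Var(w_0)$ for every $t$.

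The heart of the argument is the behaviour of $Q$ across an interaction. At a \emph{pure} $u$--$w$ interaction, i.e. a moving line $L_0$ meeting a stationary line $W_0$, the approaching pair $(L_0,W_0)$ disappears, producing a loss of at least $\delta_u\,\delta_w$; and because the local resolution is itself a Riemann problem, Remark \ref{rmk: compRP} guarantees that the $u$- and $w$-variations are preserved across it, so the contribution of every surviving pair is unchanged while the newly emanating waves, which start at the interaction point and travel rightward, lie to the right of the new stationary $w$-line and hence create no additional approaching pair (any pair they form with lines further to the right was already present for $L_0$, with the same total strength by preservation of variation). Thus $Q(t^+)\le Q(t^-)-\delta_u\delta_w$ at each such interaction. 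At a $u$--$u$ interaction $Q$ may instead increase, since a new $0$-slope line of $w$ may be generated (last lines of the proof of Lemma \ref{lem: discu}); but by Lemma \ref{lem: discu} these are finite in number, say $P$, and each raises $Q$ by at most $Var(u_0)Var(w_0)$ in view of the a priori bound. Consequently the total increase of $Q$ over $[0,+\infty)$ is at most $(P+1)Var(u_0)Var(w_0)$, whence the number of pure $u$--$w$ interactions is at most $(P+1)Var(u_0)Var(w_0)/(\delta_u\delta_w)<\infty$. Finally, a crossing of a $u$-line with a $1/2$-slope line of $w$ is simultaneously a crossing of two $u$-lines, hence already counted among the finitely many interactions of Lemma \ref{lem: discu}; so all intersections of $u$-lines with $w$-lines are finite in number.

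I expect the \emph{main obstacle} to be the verification, in the third paragraph, that a pure $u$--$w$ interaction creates no new approaching pair and leaves all other contributions unchanged. This requires a case distinction according to which of the patterns a1), a2), b1), b2i), b2ii) of Section \ref{S4} the reconstructed left and right data $(u_1,w_1)$, $(u_3,w_3)$ produce. The genuinely delicate configurations are those generating a fresh $1/2$-slope line, since such a line carries \emph{both} a $u$-jump and a $w$-jump and therefore enters $Q$ in two distinct roles; one must check that neither the total $u$-content nor the total $w$-content travelling rightward is increased, which is exactly where the total-variation identities of Remark \ref{rmk: compRP} are invoked.
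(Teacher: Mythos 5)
Your reduction of the $1/2$-slope $w$-lines to Lemma \ref{lem: discu} is correct, and a Glimm-type potential is a reasonable idea, but the key monotonicity claim --- that $Q(t^+)\le Q(t^-)-\delta_u\delta_w$ at every pure $u$--$w$ interaction --- fails, and this is a genuine gap rather than the routine case-check you defer to the last paragraph. The problem is that the Riemann problem resolved at such an interaction can create $u$-waves \emph{faster} than the incoming one, and these form brand-new approaching pairs. Concretely, take $a=1$: let a $1/2$-slope line separate $(u,w)=(0,1)$ from $(2,3)$ (both states on the upper boundary, as produced by case b1)), and let it overtake a stationary $w$-line separating $(2,3)$ from $(2,2)$. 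The Riemann problem at the crossing has left datum $(0,1)$ and right datum $(2,2)$, which is case a2); its solution consists of a $1/2$-slope line from $(0,1)$ to $(1,2)$ \emph{and a new slope-$1$ line} from $(1,2)$ to $(2,2)$. Before the interaction all the $u$-content (strength $2$) travelled at speed $1/2$ and hence, by your definition, approached no $1/2$-slope $w$-line; after it, a unit of $u$-content travels at speed $1$ and approaches every $1/2$-slope $w$-line to its right. The resulting increase of $Q$ is (new fast $u$-strength) times (total $w$-strength carried by $1/2$-slope lines to the right), which can be of order $Var(w_0)$ and is not dominated by the guaranteed loss $\delta_u\delta_w$ from the cancelled pair: the outgoing waves here are not quadratically small as in the genuinely nonlinear Glimm setting. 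Your parenthetical ``any pair they form with lines further to the right was already present for $L_0$, with the same total strength'' is exactly the false step. Nor can the increase be absorbed into the $u$--$u$ budget, since it occurs at the very $u$--$w$ interactions you are trying to count.

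The paper avoids potentials entirely and argues combinatorially: the $0$-slope $w$-lines existing at any time are a priori finitely many (the initial ones plus those created at the finitely many $u$--$u$ interactions of Lemma \ref{lem: discu}); a crossing of a $u$-line with a $0$-slope $w$-line may create a new $u$-line but never a new $0$-slope $w$-line (Figure \ref{fig: interazioneC2}); and since $u$-lines have strictly positive speed while $0$-slope $w$-lines are vertical, a newly created $u$-line can only meet vertical lines strictly to the right of its creation point, so there is no cyclic generation and the total number of crossings is finite. If you wish to salvage the functional approach you would have to build wave acceleration into the potential (for instance by weighting $u$-waves according to their speed) and then verify monotonicity through all of the patterns a1)--b2ii); as written, the proof does not go through.
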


\begin{proof} Since the discontinuity lines for $w$ with slope $1/2$ are also discontinuity lines for $u$, we restrict ourselves to the discontinuity lines for $w$ with slope $0$.

The discontinuities of $w$ with slope $0$ starting at time $t=0$ are finite and a new discontinuity with slope $0$ for $w$ may arise only by the interactions of two discontinuity lines for $u$, as in Lemma \ref{lem: discu}. Hence the total number of discontinuities of $w$ with slope $0$ that may be generated during the whole process is a-priori bounded. Now when a discontinuity line of $u$ crosses a $0$-slope discontinuity of $w$, we may generate a new discontinuity of $u$ but the number of $0$-ones for $w$ does not increase, see  the example in Figure \ref{fig: interazioneC2}.

Since the discontinuity lines for $u$ have positive slope, it is not possible that one of them, say $r_1$, intersects a vertical discontinuity line for $w$ that has already generated discontinuity lines from which, after a finite number of new intersections, $r_1$ itself is generated. 

 Concluding, since at time $0$ the number of discontinuities for $u$ is finite, then there is a finite number of intersection with $w$-discontinuity of $0$ slope.
\end{proof}\par 
 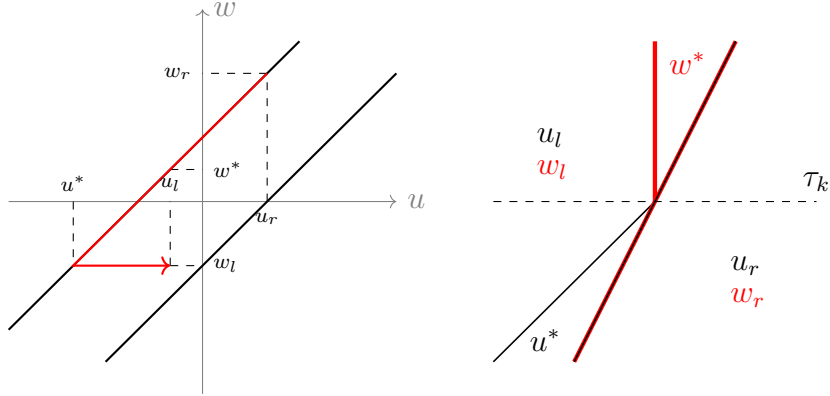
\begin{figure}
    \centering
    \begin{tikzpicture}[scale=0.85]
         \draw[->, gray] (-3, 0) -- (3, 0) node[right] {$u$}; 
    \draw[->, gray] (0, -3) -- (0, 3) node[right] {$w$};
    \draw[-, thick] (-3, -2) -- (1.5, 2.5);
    \draw[-, thick] (-1.5, -2.5) -- (3, 2); 
    
    \draw[-,dashed](1,0)--(1,2);
    \draw[-,dashed](-2,0)--(-2,-1);
    \draw[-,dashed](-0.5,-1)--(-0.5,0);
    \draw[-,dashed](-0.5,-1)--(0,-1);
    \draw[-,dashed](1,2)--(0,2);
    \draw[-,dashed](-0.5,0.5)--(0,0.5);
    \draw[->,red, thick] (1,2)--(-2,-1)--(-0.5,-1);

    \node[text = black, above] (r) at (-2,0) {\scriptsize{$u^*$}};
    \node[text = black, below] (r) at (1,0) {\scriptsize{$u_r$}};
    \node[text = black, above] (r) at (-0.5,0) {\scriptsize{$u_l$}};
    \node[text = black, left] (r) at (0,2) {\scriptsize{$w_r$}};
    \node[text = black, right] (r) at (0,-1) {\scriptsize{$w_l$}};
    \node[text = black, right] (r) at (0,0.5) {\scriptsize{$w^*$}};

    \draw[-,xshift=7cm,semithick](-2.5,-2.5)--(0,0);
    \draw[-,xshift=7cm, ultra thick,red](-1.25,-2.5)--(1.25,2.5);
    \draw[-,xshift=7cm, ultra thick,red](0,0)--(0,2.5);
    \draw[-,xshift=7cm,semithick](-1.25,-2.5)--(1.25,2.5);

    \draw[-,xshift=7cm,dashed](-2.5,0)--(2.5,0) node[above]{$\tau_k$};
    
    \node[text = black, right] (r) at (5,1) {$u_l$};
    \node[text = black, right] (r) at (8,-1) {$u_r$};
    \node[text = black, above] (r) at (5.3,-2.5) {$u^*$};
    \node[text = red, right] (r) at (5,0.5) {$w_l$};
    \node[text = red, right] (r) at (8,-1.5) {$w_r$};
    \node[text = red, below] (r) at (7.5,2.5) {$w^*$};
    
    \end{tikzpicture}
    \caption{Explaining figure for Lemma \ref{lem: discu}. We see how $u^*$ cannot be between $u_l$ and $u_r$ so after the interaction the total variation of $u$ decreases. Moreover we generate a $0$-slope discontinuity in $w$ as $w^*\not = w_l$.}
    \label{fig: interazioneC1}
\end{figure}
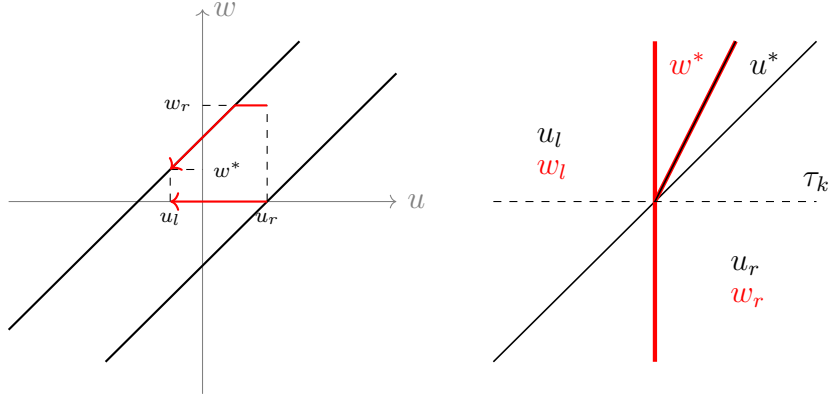
\begin{figure}
    \centering
    \begin{tikzpicture}[scale=0.85]
         \draw[->, gray] (-3, 0) -- (3, 0) node[right] {$u$}; 
    \draw[->, gray] (0, -3) -- (0, 3) node[right] {$w$};
    \draw[-, thick] (-3, -2) -- (1.5, 2.5);
    \draw[-, thick] (-1.5, -2.5) -- (3, 2); 
    
    \draw[-,dashed](1,0)--(1,1.5);
    \draw[-,dashed](-0.5,0)--(-0.5,0.5);
    \draw[-,dashed](-0.5,0.5)--(0,0.5);
    \draw[-,dashed](0,1.5)--(0.5,1.5);
    
    \draw[->,red, thick] (1,0)--(-0.5,0);
    \draw[->,red, thick] (1,1.5)--(0.5,1.5)--(-0.5,0.5);

    \node[text = black, below] (r) at (1,0) {\scriptsize{$u_r$}};
    \node[text = black, below] (r) at (-0.5,0) {\scriptsize{$u_l$}};
    \node[text = black, right] (r) at (0,0.5) {\scriptsize{$w^*$}};
    \node[text = black, left] (r) at (0,1.5) {\scriptsize{$w_r$}};

    \draw[-,xshift=7cm,semithick](-2.5,-2.5)--(2.5,2.5);
    \draw[-,xshift=7cm, ultra thick,red](0,0)--(1.25,2.5);
    \draw[-,xshift=7cm, ultra thick,red](0,-2.5)--(0,2.5);
    \draw[-,xshift=7cm,semithick](0,0)--(1.25,2.5);

    \draw[-,xshift=7cm,dashed](-2.5,0)--(2.5,0) node[above]{$\tau_k$};
    
    \node[text = black, right] (r) at (5,1) {$u_l$};
    \node[text = black, right] (r) at (8,-1) {$u_r$};
    \node[text = black, below] (r) at (8.7,2.5) {$u^*$};
    \node[text = red, right] (r) at (5,0.5) {$w_l$};
    \node[text = red, right] (r) at (8,-1.5) {$w_r$};
    \node[text = red, below] (r) at (7.5,2.5) {$w^*$};
    
    \end{tikzpicture}
    \caption{Explaining figure for Lemma \ref{lem: discw}. We see how the number of discontinuities of $u$ can increase while the total variation $Var(u(\cdot,t))$ remains constant in time.}
    \label{fig: interazioneC2}
\end{figure}
Now, we prove the existence of a solution for the Cauchy Problem \eqref{eq: hpde}, with general initial conditions $u_0$, 
$w_0$.
\begin{teo} \label{teo: hteo1}
    Consider \eqref{eq: hpde} with $T>0$ and $u_0,w_0\in BV(\R)\cap L^1(\R)$, satisfying $|u_0(x)-w_0(x)|\le a$ for almost every $x$. Then there exists a couple $(u,w)\in L^1(\R_T)$ weak solution in the sense of Definition \ref{def: hweaksol}.
\end{teo}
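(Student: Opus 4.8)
The plan is to run the wave-front tracking scheme of Lemma~\ref{lemma: ph1} on a sequence of piecewise constant approximations of the data and then pass to the limit. First I would choose $u_0^n,w_0^n$ piecewise constant with finitely many jumps so that $u_0^n\to u_0$ and $w_0^n\to w_0$ in $L^1(\R)$, with $Var(u_0^n)\le Var(u_0)$ and $Var(w_0^n)\le Var(w_0)$, and crucially still satisfying the admissibility constraint $|u_0^n-w_0^n|\le a$ a.e.; since $u_0,w_0\in L^1(\R)$, the first and last (semi-infinite) pieces must carry the value $0$, so each $u_0^n,w_0^n$ is compactly supported. Applying Lemma~\ref{lemma: ph1} to each pair produces piecewise constant $(u^n,w^n)$ on $\R\times[0,+\infty)$ solving the weak PDE \eqref{eq: hweaksol}, with $w^n=\F(u^n,w_0^n)$ for a.e.\ $x$, and — because every discontinuity slope lies in $[1/2,1]$ and the data are compactly supported — with $u^n(\cdot,t),w^n(\cdot,t)$ compactly supported for each fixed $t$ (finite speed of propagation).

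The uniform estimates come directly from Lemma~\ref{lemma: ph1}: for every $t$ one has $Var(u^n(\cdot,t))\le Var(u_0)$ and $Var(w^n(\cdot,t))\le Var(w_0)$ by \eqref{eq: plemma1}, together with a uniform $L^\infty$ bound (the values of $u^n$ stay in $[u_m,u_M]$, and those of $w^n$ are controlled likewise), and the $L^1$-in-time Lipschitz estimates \eqref{eq: plemma2}--\eqref{eq: plemma3}. The uniform spatial $BV$ bound for each $t$ together with the $L^1$ modulus of continuity in time lets me invoke Helly's theorem in the space variable combined with a Helly-type argument in time (see \cite{AB3}) to extract a subsequence (not relabelled) with $(u^n,w^n)\to(u,w)$ in $L^1_{loc}(\R_T)$ and a.e.; moreover the uniform $L^\infty$ and $L^1$ bounds with finite speed of propagation give uniform tightness, upgrading this to $L^2(\R)$ convergence of $u^n(\cdot,t),w^n(\cdot,t)$ for a.e.\ $t$. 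One gets $u,w\in L^1(\R_T)$ with $Var(u(\cdot,t))\le Var(u_0)$ and $Var(w(\cdot,t))\le Var(w_0)$ inherited by lower semicontinuity.

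Before taking limits I would verify, as announced in Remark~\ref{rmk: weaksolution}, that each approximate solution already satisfies \eqref{eq: genweakhis}. Testing the PDE against $u^n$ and integrating over $\R\times(0,t)$, the spatial flux term $\tfrac12\int_\R \big((u^n)^2\big)_x\,dx$ vanishes by compact support; handling the time derivatives of $\tfrac12(u^n)^2$ and $\tfrac12(w^n)^2$ (keeping track of the jump contributions, which only enter with a favourable sign) and invoking the characterization \eqref{eq: weakhis} of $w^n=\F(u^n,w_0^n)$ from Proposition~\ref{prop: whis}, integrated in $x$ by Fubini, bounds the energy by $-a\,|\partial w^n/\partial t|(\R\times(0,t))$. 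Thus \eqref{eq: genweakhis} holds for every $n$.

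It then remains to pass to the limit in the three conditions of Definition~\ref{def: hweaksol}. Condition \eqref{eq: hweaksol} is linear in $(u^n,w^n)$ and passes by $L^1$ convergence of the solutions and of the data $u_0^n+w_0^n\to u_0+w_0$; the pointwise constraint \eqref{eq: dishis} is preserved by a.e.\ convergence. The genuine difficulty is \eqref{eq: genweakhis}: its left-hand side converges thanks to the $L^2(\R)$ convergence established above, while for the right-hand side the decisive tool is lower semicontinuity of the total variation of the measures $\partial w^n/\partial t$ under $L^1$ convergence, which first guarantees that $\partial w/\partial t$ is a measure and then yields $|\partial w/\partial t|(\R\times(0,t))\le\liminf_n|\partial w^n/\partial t|(\R\times(0,t))$, so that, after taking $\limsup_n$, the term $-a\,|\partial w^n/\partial t|$ survives in the correct direction as $-a\,|\partial w/\partial t|$. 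Combining, $(u,w)$ satisfies \eqref{eq: genweakhis} for a.e.\ $t$ and is the sought weak solution. I expect this final step — securing that $\partial w/\partial t$ is a measure and getting the sign of the $-a|\partial w/\partial t|$ term right via lower semicontinuity, while simultaneously upgrading the a.e.\ convergence to the $L^2$ convergence needed for the quadratic energy — to be the main obstacle.
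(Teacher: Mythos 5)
Your proposal is correct and follows essentially the same route as the paper: piecewise constant approximation with non-increasing variation, Lemma \ref{lemma: ph1} plus Helly-type compactness for $L^1$ convergence, derivation of the energy inequality \eqref{eq: genweakhis} at the approximate level by combining Proposition \ref{prop: whis} (via Fubini in $x$) with the PDE written as an identity between measures and the favourable sign of the jump terms in the discrete chain rule, and finally the passage to the limit using $L^2(\R)$ convergence for the quadratic terms and lower semicontinuity of the total variation of $\partial w_n/\partial t$. The only difference is presentational: the paper spells out the summation-by-parts inequalities for the right-continuous representatives that you summarize as ``jump contributions enter with a favourable sign.''
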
 \begin{proof}
    Since $u_0$,$w_0\in BV(\R)$ then they are bounded ($|u_0(x)|,|w_0(x)| \leq M$ a.e. $x$), their limits at infinity exist and such limits are null as $u_0$, $w_0$ are also in $L^1(\R)$. Moreover, there is a sequences $\{u_0^{(n)}\}_n$ and $\{w_0^{(n)}\}_n$ of piece-wise constant functions approximating $u_0$ and $w_0$ respectively in $L^1(\R)$  with \begin{equation}
        Var( u_0^{(n)})\leq  Var(u_0) \quad \text{and} \quad  Var( w_0^{(n)})\leq  Var(w_0) \quad \forall n\in \N.
    \end{equation} 
    Let $(u_n,w_n)$ be the weak solution to the problem with initial data $u_0^{(n)}$ and $ w_0^{(n)}$ as in Lemma \ref{lemma: ph1}. Since $u_0^{(n)}$ and $w_0^{(n)}$, in order to approximate $u_0$ and $w_0$ in $L^1(\R)$, must vanish at infinity (actually, being the number of pieces finite, their last interval of constancy is a semi-line where they are null, they vanish outside a compact set), hence it is easy to see that also $u_n(\cdot,t)$ and $w_n(\cdot,t)$ has the same property, for every fixed $t$.
Applying standard procedures for the wave-front tracking method, (see \cite{DCP}, \cite{AB3}) we get the existence of a function $u \in L^1(\R_T)$ such that (up to a subsequence) \begin{equation}
        u_{n}\longrightarrow u, \quad \text{in } L^1(\R_T), \text{ as } n\to \infty.
    \end{equation} By the same argument, there exists $w\in L^1(\R_T)$ such that (up to a subsequence) $w_{n}\to w$ in $L^1(\R_T)$.

    We have to prove that $u$ and $w$ satisfy the weak formulation of the PDE \eqref{eq: hweaksol} and the hysteresis relations \eqref{eq: dishis}, \eqref{eq: genweakhis}. \par Since \eqref{eq: hweaksol} is solved by every $(u_{n},w_{n})$ then it is also true for the limit because of the $L^1$ convergence on $\R_T$. Moreover $w_{n}= \F(u_{n},w_0)$ for a.e. $x$, consequently $|w_{n}-u_{n}|\leq a$ a.e. and hence \eqref{eq: dishis} comes from a.e. point-wise convergence.
    
    It remains \eqref{eq: genweakhis}. Because of Proposition \ref{prop: whis}, since $w_n=\F(u_n)$, we have that for all $n$ and almost every $x$ and $t$ \begin{equation}\label{eq: hisgen1}
    \int\limits_{(0,t)}\left(\Tilde{u}_n(x,t)-\Tilde{w}_n(x,t)\right)d \left(\frac{\partial w_n}{\partial t}\right)_x \geq a \bigg|\left(\frac{\partial w_n}{\partial t}\right)_x\bigg|((0,t)).
    \end{equation} Here with $\left(\frac{\partial w_n}{\partial t}\right)_x$ we denote the measure associated to the distribution $w_n'(x, \cdot)$, with fixed $x$ seen as function of time, and with tilde the right continuous with respect to time representative of $u_n$ and $w_n$.
    
    Now we fix $t\in (0,T)$ such that \eqref{eq: hisgen1} holds for a.e. $x$. Since for every fixed $x$, $\Tilde{w}_n(x,\cdot)_{|_{[0,t]}}=\sum_{i=1}^{N(x,t)} \Tilde{w}_n^{(i)}(x) \1 _{[t_{i-1},t_i)}$  then \[  \int\limits_{(0,t)}\left(\Tilde{w}_n(x,t)\right)d \left(\frac{\partial w_n}{\partial t}\right)_x = \sum_{i=1}^{N(x,t)-1}\Tilde{w}_n^{(i+1)}(x) (\Tilde{w}_n^{(i+1)}(x)-\Tilde{w}_n^{(i)}(x)).\] Now, if we exclude the $0$ measure set of all the the points $x$ given by the union of the discontinuity points of $w_0^{(n)}$ with the points $x$ such that $(x,t)$ lies on a discontinuity line of $w_n$ then it holds that \[\Tilde{w}_n^{(1)}(x)=\lim_{s\to 0^+} w_n(x,s)=w_0^{(n)}(x)\] and \[\Tilde{w}_n^{(N)}(x)=\lim_{s\to t^-} w_n(x,s)=w_n(x,t).\] So by the inequality \[ \sum_{i=1}^{N-1} \Tilde{w}_n^{(i+1)}(x) (\Tilde{w}_n^{(i+1)}(x)-\Tilde{w}_n^{(i)}(x)) \geq \frac{1}{2} ((\Tilde{w}_n^{(N)})^2-(\Tilde{w}_n^{(1)})^2)\] and by integrating \eqref{eq: hisgen1} over $\R$ we deduce \begin{multline}
    \label{eq: hisgen2}
        \int_{-\infty}^{+\infty}\int\limits_{(0,t)}\left(\Tilde{u}_n(x,t)\right)d \left(\frac{\partial w_n}{\partial t}\right) \geq \\ \frac{1}{2}\int_{-\infty}^{+\infty} (w_n (x,t)^2- w_0^{(n)}(x,t)^2)~dx+a \bigg|\frac{\partial w_n}{\partial t}\bigg|(\R\times(0,t)).
    \end{multline}
    Here with $\frac{\partial w}{\partial t}$ we denote the measure on $\R \times (0,t)$ associated to the distributional derivative of $w$ with respect to $t$ seen as function of both $x$ and $t$. It exists since $w_n$ is a function of bounded variation of two variables (finite number of constant pieces), moreover it is $\frac{\partial w_n}{\partial t}= \LM^1 \otimes \left(\frac{\partial w_n}{\partial t}\right)_x$, see \cite{AFP}. \par Now for every $\phi$ smooth with compact support in $\R\times (0,t)$ because of \eqref{eq: hweaksol} we have \[\left\langle \frac{\partial u_n}{\partial t} + \frac{\partial w_n}{\partial t} + \frac{\partial u_n}{\partial x} , \phi \right\rangle = 0 \] in a distributional sense and, since $u_n$, $w_n$ are of bounded variation as functions of two variable, we can see this relation as an equality between the measures associated to the distributions i.e. \begin{equation}
             \frac{\partial w_n}{\partial t} = - \frac{\partial u_n}{\partial t} - \frac{\partial u_n}{\partial x}
        \end{equation} as measures on $\R \times (0,t)$. We then have
        
        \begin{equation}\label{eq: hisgen3}  
        \begin{split}
       \int_{-\infty}^{+\infty}& \int\limits_{(0,t)} \Tilde{u}_n(x,t) ~ d\left(\frac{\partial w_n}{ \partial t}\right)= \\ & = - \int_{-\infty}^{+\infty} \int\limits_{(0,t)}\Tilde{u}_n(x,t) ~ d\left(\frac{\partial u_n}{ \partial t}\right)- \int_{-\infty}^{+\infty} \int\limits_{(0,t)} \Tilde{u}_n(x,t) ~ d\left(\frac{\partial u_n}{ \partial x}\right)\\ & = -\int_{-\infty}^{+\infty} \int\limits_{(0,t)} \Tilde{u}_n(x,t) ~ d\left(\frac{\partial u_n}{ \partial t}\right)_x ~dx -  \int\limits_{(0,t)} \int_{-\infty}^{+\infty} \Tilde{u}_n(x,t) ~ d\left(\frac{\partial u_n}{ \partial x}\right)_t ~dt \\ &\leq -\frac{1}{2} \int_{-\infty}^{+\infty} (u_n(x,t)^2-u_0^{(n)}(x)^2)~dx-\frac{1}{2} \int\limits_{(0,t)} (u_n(+\infty,t)^2-u_n(-\infty,t)^2)~dt \\ & = -\frac{1}{2} \int_{-\infty}^{+\infty} (u_n(x,t)^2-u_0^{(n)}(x)^2)~dx. 
       \end{split}
       \end{equation} Here we have applied similar steps as above (between \eqref{eq: hisgen1}--\eqref{eq: hisgen2}), also exploiting the fact that $\Tilde{u}$ for fixed $t$ is left continuous with respect to $x$ and that $u_n(+\infty,t)=u_n(-\infty,t)=0$.\par Then from \eqref{eq: hisgen2} and \eqref{eq: hisgen3} we conclude that for all $n\in \N$ and a.e. $t\in (0,T)$ \begin{multline}\label{eq: finale}
       \frac{1}{2} \int_{-\infty}^{+\infty} (u_n(x,t)^2-u_0^{(n)}(x)^2)~dx+\frac{1}{2} \int_{-\infty}^{+\infty} (w_n(x,t)^2-w_0^{(n)}(x)^2)~dx+ \\+ a\bigg| \frac{\partial w_n}{\partial t} \bigg|(\R \times (0,t)) \leq 0.
       \end{multline} We know by construction and by the wave-front tracking method we have applied (see \cite{DCP}, \cite{AB3}) that for every fixed $t$ the sequences $u_n(\cdot,t)$, $w_n(\cdot,t)$, $u_0^{(n)}$ and $w_0^{(n)}$ converge respectively to $u(\cdot,t)$, $w(\cdot,t)$, $u_0$ and $w_0$ in $L^1(\R)$, and since they are all equibounded in $L^\infty(\R)$ we get the convergence also in $L^2(\R)$. This in particular means that \[0\le \limsup_{n\to \infty} \bigg|\frac{\partial w_n}{\partial t}\bigg|(\R \times (0,t))\leq C <+\infty\] and so, up to a subsequence, there exists a measure weak star limit of the sequence $\partial w_n/\partial t$ which must then coincide with $\partial w/\partial t$ by the convergence of $w_n$ to $w$ in $L^1(\R_T).$ Finally, thanks to the lower-semicontinuity \[\bigg|\frac{\partial w_n}{\partial t}\bigg| (\R \times (0,t))\leq \liminf_{n\to \infty} \bigg|\frac{\partial w_n}{\partial t}\bigg|(\R \times (0,t)),\] taking the liminf in \eqref{eq: finale}, we infer our desired condition \eqref{eq: genweakhis}.
\end{proof}

\begin{rmk}
We note that in \cite{AVH1} similar arguments, as in the proof above, are used but for approximating $H^1$ functions, generated by a time-discretization. Here instead the functions are much less regular, the discretization is in space for the initial data, and the derivatives are just measures. 
    \end{rmk}

\section{Uniqueness}\label{S6}

Let us recall that the couple $(u,w)$ weak solution is an entropy solution if it satisfies the following inequality \begin{equation}\label{eq: hentropy}
    \int_{\R_T} (|u-k|+|w-\hat{k}|)\phi_t+|u-k|\phi_x ~dxdt \geq 0,
\end{equation} for every $\phi$ non negative, smooth and with compact support in $\R \times (0,T)$ and for every couple $(k,\hat{k})\in \LM$.

Now we show how in our problem with hysteresis this entropy condition can be characterized in the case of piecewise constant solutions (see e.g. \cite{AB3} for the case without hysteresis).
\begin{prop}\label{prop: entropia}
    Suppose that the couple of functions $u,w:~\R\times[0,+\infty)\to \R$, satisfying $|u-w|\leq a$, consists of two constant states $(u_l,w_l)\not=(u_r,w_r)$ separated by a line of discontinuity $(s(t),t)$. Then $(u,w)$ is an entropy solution if and only if it is constructed as in Section \ref{S4} i.e. the following conditions are verified: \begin{enumerate}[i)]
        \item \label{list1}$s'(t)=:\lambda$ is constant and satisfies the generalized $\RH$ condition \eqref{eq: hrhcond};
        \item \label{list2}$\lambda$ is either $0,1$ or $1/2$;
        \item \label{list3}if $u_l=u_r$ then $\lambda=0$ i.e. we have a discontinuity only on $w$ with slope $0$;
        \item\label{list4} if $w_r=w_l$ then $\lambda=1$ so the jump in not on the boundary of the hysteresis region and $w$ remains constant;
        \item \label{list5} if $u_r>u_l$ and $w_r\not=w_l$ then $\lambda=1/2$ and $w_r=u_r+a$, $w_l=u_l+a$, i.e. the jump occurs in both $u$ and $w$ and the couple is on the upper boundary of the hysteresis region;
        \item \label{list6}if $u_r<u_l$ and $w_l\not=w_r$ then $\lambda=1/2$ and $w_r=u_r-a$, $w_l=u_l-a$, i.e. the jump occurs in both $u$ and $w$ and the couple is on the lower boundary of the hysteresis region;
        \end{enumerate}
\end{prop}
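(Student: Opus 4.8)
The plan is to localize the integral entropy inequality \eqref{eq: hentropy} on the single discontinuity line and then to probe the resulting scalar inequality with suitable pairs $(k,\hat k)\in\LM$. Set $\alpha:=u_r-u_l$, $\beta:=w_r-w_l$, $\lambda:=s'(t)$, and abbreviate $\Delta_u(k):=|u_r-k|-|u_l-k|$ and $\Delta_w(\hat k):=|w_r-\hat k|-|w_l-\hat k|$. Since $(u,w)$ is constant on each side of the curve, I would split $\R_T$ along $x=s(t)$ and apply the divergence theorem to the compactly supported $\phi$; every boundary contribution except the one on the curve cancels, and \eqref{eq: hentropy} collapses to $\int\phi(s(t),t)\,\big[(\lambda-1)\Delta_u(k)+\lambda\Delta_w(\hat k)\big]\,dt\ge 0$. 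As this must hold for every non-negative $\phi$, the entropy condition is equivalent to the pointwise jump inequality
\begin{equation*}
(\lambda-1)\,\Delta_u(k)+\lambda\,\Delta_w(\hat k)\ \ge\ 0 \qquad\text{for every }(k,\hat k)\in\LM. \tag{$\star$}
\end{equation*}

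Next I would read off \eqref{eq: hrhcond} and the constancy of $\lambda$. Taking $k=\hat k=K$ with $K$ larger than all of $u_l,u_r,w_l,w_r$ gives $\Delta_u=-\alpha$, $\Delta_w=-\beta$, hence $(\lambda-1)\alpha+\lambda\beta\le 0$; taking $K$ smaller than all four reverses the signs and yields the opposite inequality, so $(\lambda-1)\alpha+\lambda\beta=0$, which is precisely \eqref{eq: hrhcond}. As the data are constant this determines $\lambda$ uniquely, so $s$ is affine; note also that $\alpha+\beta\ne 0$, since otherwise \eqref{eq: hrhcond} could not hold with finite speed. The degenerate cases are then immediate: if $u_l=u_r$ then $\Delta_u\equiv 0$ and \eqref{eq: hrhcond} forces $\lambda=0$, while $(\star)$ reads $0\ge 0$ (this is iii); if $w_l=w_r$ then $\Delta_w\equiv 0$ and \eqref{eq: hrhcond} forces $\lambda=1$, and again $(\star)$ is automatic (this is iv). Since two distinct states fall into exactly the cases iii)--vi), once v)--vi) are settled, ii) follows by exhaustion.

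For the remaining cases $\alpha\ne 0$, $\beta\ne 0$, where \eqref{eq: hrhcond} gives $\lambda=\alpha/(\alpha+\beta)$, I treat v) ($\alpha>0$) in detail, vi) following by the reflection $(u,w)\mapsto(-u,-w)$. For \emph{sufficiency}, assume the two states lie on the upper boundary, $w_l=u_l+a$ and $w_r=u_r+a$; then $\beta=\alpha$, $\lambda=\tfrac12$, and $\Delta_w(\hat k)=\Delta_u(\hat k-a)$, so $(\star)$ becomes $\tfrac12\big(\Delta_u(\hat k-a)-\Delta_u(k)\big)\ge 0$, which holds because $\Delta_u$ is non-increasing (as $u_l<u_r$) and $(k,\hat k)\in\LM$ forces $\hat k-a\le k$. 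For \emph{necessity}, assume $(\star)$. First one shows $\beta>0$: were $\beta<0$, an admissible pair anti-aligning the two terms (taken at the right state $(u_r,w_r)$ or on the diagonal $k=\hat k$, according to the sign of $\alpha+\beta$) would, through \eqref{eq: hrhcond}, give a negative value in $(\star)$, a contradiction. Likewise the pair $(u_l,w_r)$, which is admissible as soon as $w_r-u_l\le a$, yields the value $-2\lambda\beta<0$; hence $w_r-u_l>a$. With these signs fixed I evaluate $(\star)$ on the upper edge $\hat k=k+a$: at $k=u_l$ it reduces, via \eqref{eq: hrhcond}, to $2\lambda(w_l-u_l-a)\ge 0$, i.e. $w_l\ge u_l+a$, and at $k=w_r-a$ (which lies in $[u_l,u_r]$ by the bounds just obtained) it reduces to $2(\lambda-1)(u_r-w_r+a)\ge 0$, i.e. $w_r\ge u_r+a$. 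Combined with the feasibility bounds $w_l\le u_l+a$ and $w_r\le u_r+a$, these force $w_l=u_l+a$ and $w_r=u_r+a$, whence $\beta=\alpha$ and $\lambda=\tfrac12$, which is v).

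The main obstacle is exactly the coupling constraint $|k-\hat k|\le a$ built into $\LM$. If one dropped it, the two summands of $(\star)$ could be driven to their separate minima and $(\star)$ would fail for every genuine two-sided jump; it is only the interplay between this constraint and the feasibility $|u_{l,r}-w_{l,r}|\le a$ of the two states that pushes the worst-case test pair onto an edge of $\LM$ and thereby selects precisely the boundary jumps. The delicate point is therefore to identify the constrained extremal pairs --- which is what the specific choices $(u_l,u_l+a)$ and $(w_r-a,w_r)$ accomplish above --- and to verify that the associated inequalities are saturated exactly when the two states lie on a single boundary line of the hysteresis strip.
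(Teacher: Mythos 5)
Your proposal is correct, and it reaches the key jump inequality $(\star)$ exactly as the paper does (it is the paper's \eqref{eq: hdisugentropy} rearranged), derives the \RH condition from large/small test constants in the same way, and disposes of the degenerate cases iii) and iv) identically. Where you genuinely diverge is in the core case $u_l\neq u_r$, $w_l\neq w_r$: the paper rewrites the inequality as \eqref{eq: lambda1/2} and performs a global sign analysis of the numerator $h(k,\hat k)$ over the nine regions of the $(k,\hat k)$-plane determined by $u_l,u_r,w_l,w_r$, then argues geometrically that unless both states sit on one boundary line of $\LM$, the strip meets a region where $h$ has the wrong sign. You instead evaluate the inequality at a short list of explicit extremal pairs on the edges of $\LM$ --- $(u_r,w_r)$ and $(u_l,w_l)$ to rule out anti-aligned jumps, $(u_l,w_r)$ to force $w_r-u_l>a$, and then $(u_l,u_l+a)$ and $(w_r-a,w_r)$ on the edge $\hat k=k+a$ to obtain $w_l\ge u_l+a$ and $w_r\ge u_r+a$, which saturate the feasibility constraints. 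Each test pair yields a closed-form inequality, so your necessity argument is more mechanical and easier to audit line by line, and your sufficiency check via $\Delta_w(\hat k)=\Delta_u(\hat k-a)$ together with the monotonicity of $\Delta_u$ and $\hat k-a\le k$ is cleaner than re-running the sign analysis; the paper's region picture, on the other hand, makes the geometric role of the strip more transparent at a glance. The only spot you should tighten is the elimination of $\beta<0$: the phrase about ``anti-aligning the two terms according to the sign of $\alpha+\beta$'' should be replaced by the explicit computation that the pair $(u_r,w_r)$ gives the value $-2\alpha|\beta|/(\alpha+\beta)$ and the pair $(u_l,w_l)$ gives $+2\alpha|\beta|/(\alpha+\beta)$, so one of the two always violates $(\star)$ when $\alpha>0>\beta$; with that made precise, and the reflection $(u,w,k,\hat k)\mapsto(-u,-w,-k,-\hat k)$ handling case vi), the proof is complete.
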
 \begin{proof}
    First notice that integrating by parts \eqref{eq: hentropy} with $(u,w)$ piece-wise constant we can rewrite such entropy condition as follows \begin{equation}\label{eq: hdisugentropy}
        s'(t)[(|u_r-k|+|w_r-\hat{k}|)-(|u_l-k|+|w_l-\hat{k}|)]-[|u_r-k|-|u_l-k|]\geq 0.
    \end{equation} So the couple $(u,w)$ is an entropy solution if and only if \eqref{eq: hdisugentropy} holds  for every $(k,\hat{k}) \in \LM$.

    \noindent $(\implies)$ Suppose that the pair $(u,w)$ is an entropy solution. Then, for any $u_r,u_l,w_r,w_l$, we can always find a pair $(k,\hat{k})\in \LM $ with $k, \hat{k}$ large enough such that \eqref{eq: hdisugentropy} reads as follows \begin{equation}
        s'(t)[u_l+w_l-u_r-w_r]-[u_l-u_r]\geq0
    \end{equation} and, by inserting in \eqref{eq: hdisugentropy} their opposite values, it becomes
    \begin{equation}
        -s'(t)[u_l+w_l-u_r-w_r]+[u_l-u_r]\geq0.
    \end{equation} We then infer that $s'(t)$ has to satisfy the \RH condition \eqref{eq: hrhcond} that is \begin{equation}\label{eq: lambda}
       [u_l-u_r+w_l-w_r]s'(t)=u_l-u_r,
    \end{equation} hence $s'(t)=:\lambda$ has to be constant, so we proved $\ref{list1})$. Notice that $u_l-u_r+w_l-w_r$ cannot be equal to $0$ otherwise from \eqref{eq: lambda} we would get also $u_l-u_r=0$ and so $w_l-w_r=0$. This means $u_l=u_r$, $w_l=w_r$ which we excluded by hypothesis.

    We can now rewrite \eqref{eq: lambda} as follows \begin{equation}\label{eq: unic2}
    \lambda[w_l-w_r]+ (\lambda-1)(u_l-u_r)=0. \end{equation} 
    So if $u_l=u_r$ then $\lambda=0$, and the left-hand-side of \eqref{eq: hdisugentropy} becomes $0$, independently from $(k,\hat{k})$. So the couple $(u,w)$ is still an entropy solution and we imply case $\ref{list3})$. \par If $w_l=w_r$ then $\lambda=1$ so we infer case $\ref{list4})$. Notice that even in this case \eqref{eq: hdisugentropy} is trivially satisfied.

    Suppose now instead $w_r\not=w_l$ and $u_r\not=u_l$ and recall that $u_l-u_r+w_l-w_r$ is either positive or negative. Keeping this in mind and recalling \eqref{eq: lambda} we write \eqref{eq: hdisugentropy} as follows \begin{equation}\label{eq: lambda1/2}
        \frac{(u_l-u_r)(|w_r-\hat{k}|-|w_l-\hat{k}|)+(w_r-w_l)(|u_r-k|-|u_l-k|)}{u_l-u_r+w_l-w_r} \geq 0.
    \end{equation}Now, for $(k,\hat{k})\in \R^2$ we study the function \[h(k,\hat{k}):= (u_l-u_r)(|w_r-\hat{k}|-|w_l-\hat{k}|)+(w_r-w_l)(|u_r-k|-|u_l-k|),\] which is the numerator appearing in \eqref{eq: lambda1/2}. We can split the plane $\R^2$ in $9$ regions, representing the cases when $k$ is above, below or between $u_l,u_r$ combined with the cases $\hat{k}$ above, below or between $w_l,w_r$, see Figure \ref{fig: segni}.
    It is easy to prove that $h$ is constant in $R_1, R_3, R_7 $ and $R_9$ as in these parts both $k$ and $\hat{k}$ cancel out. In particular, in $R_1$ it is $h\equiv 2(u_l-u_r)(w_l-w_r)$ which is strictly positive if the jump between $u_l$ and $u_r$ has the same monotonicity of the jump between $w_l$ and $w_l$ and strictly negative otherwise. In $R_9$ instead $h\equiv -2(u_l-u_r)(w_l-w_r)$, and in $R_3$ and $R_7$ it holds $h\equiv0$. It is easy so see then that in the other regions $h$ is affine, connecting continuously the constant parts, so we can deduce that it has the same constant sign in $R_2$ and $R_4$ and the opposite one in $R_6$ and $R_8$. In $R_5$ we have a change of sign on the segment $AB$ connecting $(\min(u_l,u_r),\min(w_l,w_r))$ to $(\max(u_l,u_r),\max(w_l,w_r))$. \par With this in mind we then conclude that if the jump in $w$ has opposite sign with respect to the jump in $u$ then this violates our condition \eqref{eq: hdisugentropy}. This is because both the pairs $(u_r,w_r)$ and $(u_l,w_l)$ are contained in $\LM$ and, see Figure \ref{fig: segni}, we can find multiple pairs in $\LM$ (e.g. $(u_l,w_l)$,$(u_r,w_r)$ themselves) such that $h$ is either positive or negative. Since the denominator $u_l-u_r+w_l-w_r\not=0$ has fixed sign this means that the inequality \eqref{eq: hdisugentropy} cannot be true for all $(k,\hat{k})\in \LM$. \par So we can only have jumps in $w$ with the same sign of jumps in $u$. If $u_l>u_r$ then the denominator $u_l-u_r+w_l-w_r$ is positive, hence it must be $h\geq 0$ for all $(k,\hat{k})\in \LM$. But
    the only possibility is that the points $A=(u_r,w_r),B=(u_l,w_r)$ (see Figure \ref{fig: segni}) 
    both belong to the lower boundary of $\cal L$. Otherwise at least one of them has a neighborhood containing points $(k,\hat k)$ in $\cal L$ where $h$ is negative.
    So positive jumps occur only on the lower boundary of $\cal L$. Similarly, if $u_l<u_r$, then the points must lie on the upper boundary of $\cal L$. So negative jumps occur only on the upper boundary of $\cal L$. Also note that, in both cases, $u$ and $w$ must have a jump of the same amplitude.  \par So we checked that for $u_l\not= u_r$ and $w_l\not=w_r$, \eqref{eq: hdisugentropy} implies $\lambda=1/2$ (see \eqref{eq: lambda}) and either case $\ref{list5})$ or $\ref{list6})$. \par As there are no other possible choices of $(u_l,w_l),(u_r,w_r)$ we also infer $\ref{list2})$.

    \noindent $(\impliedby)$ Now suppose $\ref{list1}), \dots, \ref{list6})$ to hold. If $u_l=u_r$ then we are in case $\ref{list3})$ with $\lambda=0$. As already said, the left-hand-side of \eqref{eq: hdisugentropy} becomes $0$, hence the inequality is trivially satisfies for every $(k,\hat{k})\in \LM$. \par If $w_l=w_r$ then we are in case $\ref{list4})$ with $\lambda=1$ and again \eqref{eq: hdisugentropy} is trivially satisfies. 
    \par Suppose instead $\lambda=1/2$, and notice again, because of $\ref{list1})$, that $u_l-u_r+w_l-w_r\not=0$ otherwise we would get $u_l=u_r$ and $w_l=w_r$ which we excluded by hypothesis. This allows us to rewrite \eqref{eq: hdisugentropy} as \eqref{eq: lambda1/2}. Applying the same reasoning as for the previous implication, it can be verified that for both the cases $\ref{list5})$ and $\ref{list6})$, \eqref{eq: lambda1/2} holds true for every $(k,\hat{k})\in \LM$.
\end{proof}
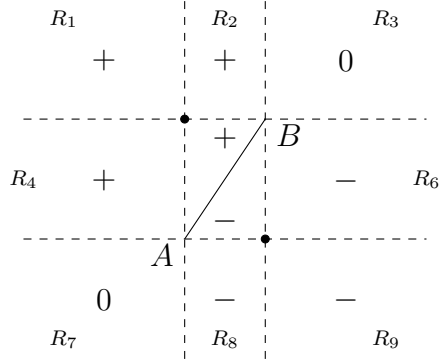
\begin{figure}
    \centering
    \begin{tikzpicture}[scale=0.53]
    
    \draw[-,dashed](4,0)--(4,9);
    \draw[-,dashed](6,0)--(6,9);
    \draw[-,dashed](0,3)--(10,3);
    \draw[-,dashed](0,6)--(10,6);
    \draw[-] (4,3)--(6,6);

    \node[text = black, left] (r) at (4,2.6) {$A$};
    \node[text = black, right] (r) at (6,5.6) {$B$};

    \filldraw [black] (6,3) circle (2.8pt);
    \filldraw [black] (4,6) circle (2.8pt);

    \node[text = black, below] (r) at (1,9) {\scriptsize{$R_1$}};
    \node[text = black, below] (r) at (5,9) {\scriptsize{$R_2$}};
    \node[text = black, below] (r) at (9,9) {\scriptsize{$R_3$}};
    \node[text = black, above] (r) at (1,0) {\scriptsize{$R_7$}};
    \node[text = black, above] (r) at (5,0) {\scriptsize{$R_8$}};
    \node[text = black, above] (r) at (9,0) {\scriptsize{$R_9$}};
    \node[text = black, above] (r) at (0,4) {\scriptsize{$R_4$}};
    \node[text = black, above] (r) at (10,4) {\scriptsize{$R_6$}};

    \node[text = black, below] (r) at (2,2) {$0$};
    \node[text = black, below] (r) at (2,5) {$+$};
    \node[text = black, below] (r) at (2,8) {$+$};
    \node[text = black, below] (r) at (8,2) {$-$};
    \node[text = black, below] (r) at (8,5) {$-$};
    \node[text = black, below] (r) at (8,8) {$0$};
    \node[text = black, below] (r) at (5,2) {$-$};
    \node[text = black, below] (r) at (5,4) {$-$};
    \node[text = black, above] (r) at (5,5) {$+$};
    \node[text = black, below] (r) at (5,8) {$+$};

    \end{tikzpicture}
    \caption{The sign of $h$, when the jump between $u_l$ and $u_r$ has the same monotonicity as the jump between $w_l$ and $w_r$. Here $A=(\min(u_l,u_r),\min(w_r,w_l))$ and $B=(\max(u_l,u_r),\max(w_r,w_l)).$ Note that if the jump between $u_l$ and $u_r$ has different monotonicity to the jump between $w_l$ and $w_r$, then the picture remains the same but with opposite signs, in particular, the segment $AB$ is the same. In this case the pairs $(u_l,w_l)$, $(u_r,w_r)$ lie on the highlighted points.}
    \label{fig: segni}
\end{figure}
Thanks to Proposition \ref{prop: entropia}, it is easy to see that the sequence of piece-wise constant solutions $(u_n,w_n)$ of the Cauchy problem with $u_0^{(n)}$ and $w_0^{(n)}$ as initial data constructed via the Wave-Front Tracking, is entropy admissible, so it satisfies \eqref{eq: hentropy}. Finally, since we have $L^1(\R_T)$ convergence, we can conclude that the solution $(u,w)$ as constructed in the proof of Theorem \ref{teo: hteo1} also satisfies \eqref{eq: hentropy}. As a consequence of the next theorem, we then deduce that such a solution is also the only entropy solution.
\begin{teo}
    Consider the Cauchy problems with initial conditions $u_0^1,w_0^1$ and $u_0^2,w_0^2$ respectively, where $u_0^i,w_0^i \in L^1(\R)\cap BV(\R)$. Let us denote by $(u_1,w_1)$ and $(u_2,w_2)$ two entropy solutions to the respective problems. Then it holds that \begin{equation}
        \int_{-\infty}^{+\infty} (|u_1-u_2|(x,t)+|w_1-w_2|(x,t))~dx \leq \int_{-\infty}^{+\infty}(|u_0^1-u_0^2|+|w_0^1-w_0^2|)~dx, 
    \end{equation} for almost every $t\in [0,T]$.
\end{teo}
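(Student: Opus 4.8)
The plan is to adapt the Kruzhkov doubling-of-variables technique, taking advantage of the fact that the spatial flux in \eqref{eq: hpde} is the linear function $u$: the entropy flux $q(u,k)=\operatorname{sign}(u-k)(u-k)=|u-k|$ is itself an absolute value, so the flux contributions will combine without any residual term, and the hysteretic variable $w$ will enter only through the time-derivative part of the entropy and ride along passively.

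First I would double the variables and apply \eqref{eq: hentropy} to each solution with the other one's values as the admissible constants. For $(u_1,w_1)$, test against $\phi(x,t,y,s)\ge0$ with $(k,\hat{k})=(u_2(y,s),w_2(y,s))$, which belongs to $\LM$ since $|u_2-w_2|\le a$ a.e.; symmetrically, for $(u_2,w_2)$ take $(k,\hat{k})=(u_1(x,t),w_1(x,t))\in\LM$. After integrating each inequality over the complementary pair of variables and adding them, the entropy densities $|u_1-u_2|$ and $|w_1-w_2|$ coincide in the two contributions, while the flux parts become $|u_1-u_2|(\phi_x+\phi_y)$ with no leftover, precisely because $q$ is the modulus.

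Then I would concentrate on the diagonal with $\phi(x,t,y,s)=\psi\!\big(\tfrac{x+y}{2},\tfrac{t+s}{2}\big)\rho_\varepsilon\!\big(\tfrac{x-y}{2}\big)\rho_\varepsilon\!\big(\tfrac{t-s}{2}\big)$, where $\rho_\varepsilon$ is a mollifier and $0\le\psi\in C_c^\infty(\R\times(0,T))$; the combinations $\phi_x+\phi_y$ and $\phi_t+\phi_s$ annihilate the $\rho_\varepsilon'$ terms, and letting $\varepsilon\to0$ (using Lebesgue points and the $L^1$ regularity of $|u_1-u_2|$ and $|w_1-w_2|$) produces the reduced inequality $\int_{\R_T}\big[(|u_1-u_2|+|w_1-w_2|)\psi_t+|u_1-u_2|\psi_x\big]\,dx\,dt\ge0$. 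Choosing $\psi(x,t)=\beta(t)\gamma(x/R)$ with $\gamma\equiv1$ near the origin, the flux term is $O(1/R)$ and vanishes as $R\to\infty$ because $|u_1-u_2|(\cdot,t)\in L^1(\R)$; what remains is $\int_0^T G(t)\beta'(t)\,dt\ge0$ for every $0\le\beta\in C_c^\infty(0,T)$, where $G(t):=\int_\R(|u_1-u_2|+|w_1-w_2|)(x,t)\,dx$. Hence $G'\le0$ in $\D'(0,T)$, i.e.\ $G$ is essentially non-increasing, so that $G(t)\le G(t_1)$ for $0<t_1<t$, and sending $t_1\to0^+$ gives the claim once the initial data are matched.

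I expect the genuine obstacle to be the recovery of the initial datum: the entropy inequality \eqref{eq: hentropy} is posed only for $\phi$ compactly supported in the open strip $\R\times(0,T)$ and therefore carries no boundary term at $t=0$, so the argument above only yields monotonicity of $G$ on $(0,T)$. To close the estimate one must independently establish that $\|u_i(\cdot,t)-u_0^i\|_{L^1(\R)}\to0$ as $t\to0^+$, and similarly for $w_i$ — which for the solutions at hand follows from the time-Lipschitz bounds \eqref{eq: plemma2}--\eqref{eq: plemma3} of Lemma \ref{lemma: ph1} together with the weak formulation \eqref{eq: hweaksol} — and then pass to the limit $t_1\to0^+$ to identify $G(t_1)\to\int_\R(|u_0^1-u_0^2|+|w_0^1-w_0^2|)\,dx$. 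A secondary, purely technical, point is to justify inserting the $BV$ functions $(u_j,w_j)$ as the constants $(k,\hat{k})$ and the passage $\varepsilon\to0$, given that $\partial w_i/\partial t$ is only a measure; this is handled by the standard Kruzhkov mollification once the linearity of the flux has removed the usual nonlinear error term.
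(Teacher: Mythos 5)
Your proposal is correct and takes essentially the same route as the paper, whose proof consists only of the remark that one adapts Kruzhkov's doubling-of-variables method as done by Visintin in \cite{AVH1} for the delayed relay; you simply supply the details that the paper delegates to that reference (choosing $(k,\hat k)=(u_2,w_2)$, exploiting the linearity of the flux so that no residual term appears, concentrating on the diagonal, and deducing monotonicity of $t\mapsto\int_\R(|u_1-u_2|+|w_1-w_2|)\,dx$). The one caveat is that your recovery of the initial trace leans on the Lipschitz-in-time estimates of Lemma \ref{lemma: ph1}, which are established only for the wave-front-tracking solutions rather than for arbitrary entropy solutions; this subtlety is real, but the paper's own proof does not address it either.
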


\begin{proof}
    Adapting the standard method by Kruzkov \cite{Krukov}, in \cite{AVH1} an analogous result is proven for the case of delayed-relay hysteresis. The adaptation to our case with Play hysteresis is not difficult. 
\end{proof}


\printbibliography
\end{document}